\documentclass[preprint]{article}
\usepackage{graphicx, hyperref, relsize, float}
\usepackage{pdflscape, multirow, tabularray} 
\usepackage{fullpage, microtype, amsmath, amsthm, enumerate, amsfonts, amssymb, hyperref, xcolor, mathtools, tikz-cd, colonequals}
\usepackage[inner=1.0in,outer=1.0in,bottom=1.0in, top=1.0in]{geometry}

\usepackage[all,arc,curve,frame,color]{xy}
\usepackage{subfigure}

\newcommand{\Q}{\mathbb{Q}}
\newcommand{\Z}{\mathbb{Z}}

\theoremstyle{definition}

\renewcommand{\P}{\mathbb{P}}

\newcommand{\Gal}{\textnormal{Gal}}
\newcommand{\Aut}{\textnormal{Aut}}

\newtheorem{theorem}{Theorem}[section]

\newtheorem{prop}[theorem]{Proposition}

\newtheorem{definition}[theorem]{Definition}
\newtheorem{example}[theorem]{Example}
\newtheorem{remark}[theorem]{Remark}

\begin{document}


\title{Models of Elliptic Curves with Maximal 2-adic Image Subject to a Rational 2-Isogeny}
\author{Roberto Hernandez}

\maketitle





\begin{abstract}
    \noindent Let $E$ be a non-CM elliptic curve defined over $\mathbb{Q}$. We present models of curves that have 2-adic image as large as possible, given the constraint that they have a rational 2-isogeny. These curves are known to be generic, and one of the aims of this article is to make that fact explicit. Our techniques make use of modular curves and their $j$-maps, as well as classification results on isogeny-graphs that have appeared in the literature recently. 
\end{abstract}

\section{Introduction}
Let $E$ be an elliptic curve defined over $\Q$ without CM. Let $n > 1$ be an integer. We denote by $E[n]$ the $n$-torsion subgroup of $E(\overline{\Q})$. The absolute Galois group $G_\Q \colonequals  \Gal(\overline{\Q}/\Q)$ acts on $E[n]$ by its action on the coordinates of the points. Moreover, since $E[n]$ is a free $\Z/n\Z$ module of rank 2, we may identify (after fixing a basis) $\Aut(E[n])$ with $\text{GL}_2(\Z/n\Z)$. Then we have the following mod $n$ Galois representation attached to $E$,
\begin{align*}
    \rho_{E,n} \colon G_\Q \to \Aut(E[n]) \cong \text{GL}_2(\Z/n\Z).
\end{align*}
Thus, we may view the image $\rho_{E,n}(G_\Q)$ as a subgroup of $\text{GL}_2(\Z/n\Z)$, determined up to conjugacy. Fix a prime $\ell$ and choose compatible bases for $E[\ell^k]$, for all $k$. Doing so, we can take the inverse limit of these mod $\ell^k$ Galois representations and obtain another representation,
\begin{align*}
    \rho_{E,\ell^\infty} \colon G_\Q \to \text{GL}_2(\Z_\ell),
\end{align*}
called the $\ell$-adic Galois representation attached to $E$. 

A major result in this area is due to Serre \cite{serre1972proprietes}, who proved that for all but finitely many primes $p$, the representation $\rho_{E,p}$ is surjective. More recently, Rouse and Zureick-Brown \cite{rzb2adic} classified, up to conjugacy, the 2-adic images that arise from non-CM elliptic curves over $\Q$. Subsequently, Rouse, Sutherland, and Zureick--Brown studied the analogous question for $\ell$-adic images, obtaining explicit classifications in many cases, while other cases remain conjectural or conditional on unresolved questions about rational points on certain modular curves. In the CM case, Lozano-Robledo \cite{lozano2022galois} obtained a corresponding classification, where the possible images are constrained by the structure of the Cartan subgroups and their normalizers. This fits into a broader landscape of computing possible Galois images. For instance, Zywina \cite{zywina2015possible} studied the possible non-surjective mod $\ell$ images for non-CM elliptic curves over $\Q$, substantially narrowing the possibilities that can occur. 

Let $\mathcal{E}$ be a $\Q$-isogeny class of elliptic curves defined over $\Q$. The isogeny graph associated to $\mathcal{E}$ is a graph whose vertices represent elliptic curves in $\mathcal{E}$, and whose edges represent the cyclic $\Q$-isogeny between two curves in $\mathcal{E}$ labeled with the degree of the isogeny. We define the isogeny-torsion graph to be an isogeny-graph with the additional information that each vertex also records the abstract torsion subgroup of the elliptic curve belonging to that vertex. For example, the isogeny graph
\begin{center}
\begin{minipage}{0.2\textwidth}
        \begin{tikzcd}
        E_1 \arrow[r, "2", no head] \arrow[d, "3"', no head] & E_2 \arrow[d, "3", no head] \\
        E_3 \arrow[r, "2"', no head]                         & E_4                         
        \end{tikzcd}

\end{minipage}
\end{center}
will have isogeny-torsion graph labeled as $([6],[6],[2],[2])$ to denote that the torsion subgroups of $E_1$, $E_2$, $E_3$, and $E_4$ are $\Z/6\Z$, $\Z/6\Z$, $\Z/2\Z$, and $\Z/2\Z$, respectively.

A complete classification of isogeny-torsion graphs that appear over $\Q$ is given in \cite{chiloyanalvaro}. Using this classification of isogeny-torsion graphs, Chiloyan \cite[Section 8]{Chiloyan2adic} classified the 2-adic images that appear for isogeny-torsion graphs of elliptic curves without CM defined over $\Q$. Here, 2-adic images are labeled using the Rouse, Sutherland, and Zureick-Brown label $[N.i.g.n]$ from the LMFDB, where $N$ is the level of the subgroup $H \subset \text{GL}_2(\Z_2)$, $i = [\text{GL}_2(\Z_2) : H]$ is its index, $g$ is the genus of the modular curve $X_H$, and $n$ is a tie-breaker. In particular, the label [2.3.0.1] denotes the unique conjugacy class of level 2, index 3, and genus 0 in $\text{GL}_2(\Z_2)$, corresponding to curves whose mod 2 Galois image is contained in a Borel subgroup (i.e., curves possessing a rational 2-isogeny). This is where our motivation for this project began. We summarize results gathered from \cite[Tables 11, 16, and 17]{Chiloyan2adic}, and set the notation for the different $\Q$-isogeny classes we will be concerned with.

\begin{definition}
    An elliptic curve is $\ell$-maximal if its $\ell$-adic image is as large as possible given the constraint of having a rational $\ell$-isogeny.
\end{definition}

\begin{example}
    The elliptic curve $y^2+xy+y=x^3-16x-25$ is 2-maximal, since it has a rational 2-isogeny and it has 2-adic image [2.3.0.1].
\end{example}

\begin{theorem}[Chiloyan] \label{chiloyan}
    Let $E$ be an elliptic curve defined over $\Q$ without complex multiplication that has 2-adic image [2.3.0.1]. Then the isogeny-torsion graph containing $E$ is 2-maximal and is one of:
    \begin{enumerate}
        \item $L_2(2)$ with isogeny-torsion graph $([2],[2])$
        \begin{minipage}{0.2\textwidth}
        \centering
        \begin{tikzcd}
        E_1 \arrow[r, "2", no head] & E_2
        \end{tikzcd}
        \end{minipage}
        \item $R_4(6)$ with isogeny-torsion graph $([2],[2],[2],[2])$
        \begin{minipage}{0.2\textwidth}
        \centering
        \begin{tikzcd}
        E_1 \arrow[r, "2", no head] \arrow[d, "3"', no head] & E_2 \arrow[d, "3", no head] \\
        E_3 \arrow[r, "2"', no head]                         & E_4                         
        \end{tikzcd}
        \end{minipage}
        \item $R_4(6)$ with isogeny-torsion graph $([6],[6],[2],[2])$
        \begin{minipage}{0.2\textwidth}
        \centering
        \begin{tikzcd}
        E_1 \arrow[r, "2", no head] \arrow[d, "3"', no head] & E_2 \arrow[d, "3", no head] \\
        E_3 \arrow[r, "2"', no head]                         & E_4                         
        \end{tikzcd}
        \end{minipage}
        \item $R_4(10)$ with isogeny-torsion graph $([2],[2],[2],[2])$
        \begin{minipage}{0.2\textwidth}
        \centering
        \begin{tikzcd}
        E_1 \arrow[r, "2", no head] \arrow[d, "5"', no head] & E_2 \arrow[d, "5", no head] \\
        E_3 \arrow[r, "2"', no head]                         & E_4                         
        \end{tikzcd}
        \end{minipage}
        \item $R_4(10)$ with isogeny-torsion graph $([10],[10],[2],[2])$
        \begin{minipage}{0.2\textwidth}
        \centering
        \begin{tikzcd}
        E_1 \arrow[r, "2", no head] \arrow[d, "5"', no head] & E_2 \arrow[d, "5", no head] \\
        E_3 \arrow[r, "2"', no head]                         & E_4                         
        \end{tikzcd}
        \end{minipage}
        \item $R_6$ with isogeny-torsion graph $([2],[2],[2],[2],[2],[2])$
        \begin{minipage}{0.3\textwidth}
        \centering
        \begin{tikzcd}
        E_1 \arrow[r, "3", no head] \arrow[d, "2"', no head] & E_3 \arrow[d, "2"', no head] \arrow[r, "3", no head] & E_5 \arrow[d, "2", no head] \\
        E_2 \arrow[r, "3"', no head]                         & E_4 \arrow[r, "3"', no head]                         & E_6                  
        \end{tikzcd}
        \end{minipage}
        \item $R_6$ with isogeny-torsion graph $([6],[6],[6],[6],[2],[2])$
        \begin{minipage}{0.3\textwidth}
        \centering
        \begin{tikzcd}
        E_1 \arrow[r, "3", no head] \arrow[d, "2"', no head] & E_3 \arrow[d, "2"', no head] \arrow[r, "3", no head] & E_5 \arrow[d, "2", no head] \\
        E_2 \arrow[r, "3"', no head]                         & E_4 \arrow[r, "3"', no head]                         & E_6                  
        \end{tikzcd}.
        \end{minipage}
    \end{enumerate}
\end{theorem}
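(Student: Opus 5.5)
The plan is to work from the classification of isogeny-torsion graphs over $\Q$ in \cite{chiloyanalvaro} and eliminate graph types one at a time. Suppose $E$ is non-CM with $2$-adic image $H=[2.3.0.1]$. This is the full preimage in $\text{GL}_2(\Z_2)$ of a Borel subgroup $B\subset \text{GL}_2(\Z/2\Z)$, so the following facts about $E$ come directly from $H$.

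\begin{enumerate}[(i)]
\item $E$ has exactly one rational $2$-isogeny, because $B$ fixes a unique line in $E[2]$.
\item $E$ has no rational cyclic $4$-isogeny, because the preimage of $B$ in $\text{GL}_2(\Z/4\Z)$ fixes no cyclic subgroup of order $4$.
\item $E(\Q)[2]\cong \Z/2\Z$ and $E(\Q)[4]=E(\Q)[2]$.
\end{enumerate}

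Every curve $\Q$-isogenous to $E$ by an odd-degree isogeny has the same $2$-adic image, since such an isogeny induces a Galois-equivariant isomorphism on $2$-power torsion. The curve $E'=E/C$, where $C$ is the kernel of the $2$-isogeny, has a $2$-adic image that is conjugate to $H$ inside $\text{GL}_2(\Q_2)$ and has the same index. One checks that it is again $[2.3.0.1]$. Hence every vertex of the isogeny-torsion graph has image $[2.3.0.1]$, which proves the assertion that the graph is $2$-maximal.

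The next step is to restrict the shape of the graph. By (i) and (ii), the $2$-power part of the isogeny graph is a single edge: it is of type $L_2(2)$ and not $T_4$, $T_6$, $T_8$, or $L_2$ with a $4$-cycle. Among the graphs in \cite{chiloyanalvaro} containing a $2$-edge, the types whose $2$-part is exactly one edge are $L_2(2)$, $R_4(6)$, $R_4(10)$, $R_4(14)$, $R_4(15)$-type products with a $2$-edge, $R_6$, and $R_4(\cdot)$ combined with $3$- and $9$-structure. We then use the known rational points on modular curves. Combining a $2$-isogeny with a cyclic $N$-isogeny yields a cyclic $2N$-isogeny. By Mazur and Kenku, the degree $2N$ must lie in $\{2,\dots,10,12,16,18\}$, excluding $14$ apart from finitely many $j$-invariants, which are CM or which one checks directly. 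Since $4\nmid 2N$ by (ii), the odd part is $N\in\{1,3,5,9\}$.

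The case $N=9$, i.e.\ a cyclic $18$-isogeny, must be excluded. The curve $X_0(18)$ has genus $0$, so it has rational points, and the paper indicates that $R_6$ with a $9$ chain does not appear in the list. I would exclude it using the explicit result of \cite{Chiloyan2adic}: a cyclic $18$-isogeny forces a nontrivial entanglement that makes the $2$-adic image smaller. Concretely, the $j$-map of $X_0(18)$ factors so that $\Q(\sqrt{\Delta_E})$ becomes a fixed field already determined in a way that forces image $[2.6.0.1]$ or a similar group. I expect this to be the main obstacle. It requires the explicit $j$-map of $X_0(18)$ composed against the fiber product with the $2$-adic modular curves of index $6$, together with a check that the genus $0$ or $1$ curves that arise have no non-cuspidal, non-CM points giving image $[2.3.0.1]$. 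The cases $N=3$ and $N=5$ produce $R_4(6)$ and $R_4(10)$. The case $N=9$ with only a $3$-chain, i.e.\ isogenies of degree $3$ and $9$ with $2$, produces $R_6$, which is the cyclic $18$ configuration of length-two $3$-chains. So the obstacle reduces to determining which $18$-configurations are compatible, and the classification tables in \cite{Chiloyan2adic} resolve this.

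Finally, the torsion labels. Each vertex has $2$-primary torsion exactly $\Z/2\Z$ by (iii). The odd torsion can only be $3$ or $5$: the point order divides an isogeny degree present in the graph, and Mazur's theorem rules out $\Z/14$ and higher. In $R_4(p)$ for $p=3,5$, a rational $p$-torsion point generates the kernel of one $p$-isogeny. Its image under the $p$-isogeny is trivial, and the dual isogeny curve carries $\mu_p$ rather than $\Z/p$. So either no vertex has odd torsion, or exactly the two vertices on one side of the $2$-edge do, since $2$-isogenies preserve odd torsion. This gives the labels $([2],[2],[2],[2])$ and $([2p],[2p],[2],[2])$. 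For $R_6$ the same reasoning along the $3$-chain gives torsion $\Z/3$ on at most the first two columns, which yields $([6],[6],[6],[6],[2],[2])$ or the all-$[2]$ graph. Matching these against the isogeny-torsion graphs that occur in \cite{chiloyanalvaro} completes the list.
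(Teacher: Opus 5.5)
Your reduction is sound up to the case analysis. Odd-degree isogenies preserve the $2$-adic image, and the $2$-isogenous curve has a $\text{GL}_2(\Q_2)$-conjugate image that is again $[2.3.0.1]$. So the $2$-part of the graph is a single edge, and the odd part of the cyclic isogenies lies in $\{1,3,5,9\}$. The two $j$-invariants with a rational $14$-isogeny, $-3^3 5^3$ and $3^3 5^3 17^3$, are indeed CM. The paper gives no argument of its own here; it simply quotes Chiloyan's tables, so reconstructing the result from the isogeny-graph classification is a reasonable plan.

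\textbf{The gap: your treatment of $N=9$ is wrong.} You say a cyclic $18$-isogeny ``must be excluded'' because it forces an entanglement that shrinks the $2$-adic image. You also say the paper indicates that $R_6$ with a $9$-chain does not occur. Both claims are false:
\begin{itemize}
\item $R_6$ is exactly the graph of a class containing a cyclic $18$-isogeny, namely the composite $E_1\to E_3\to E_5\to E_6$ in the picture.
\item Items (6) and (7) of the statement you are proving list $R_6$.
\item The paper later exhibits $2$-maximal curves with a rational $9$-isogeny: $t=-56/65$ in the $X_0(6)$ family and $t=1/124$ in the $X_1(6)$ family.
\end{itemize}
If your exclusion were carried out, it would show that $R_6$ never occurs with image $[2.3.0.1]$, which is wrong. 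Your next sentences then say that $N=9$ produces $R_6$, which contradicts the exclusion. So the argument is incoherent at exactly the step you flag as the main obstacle.

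\textbf{The fix: at $N=9$ there is nothing to exclude.} A cyclic $27$-isogeny cannot occur, because together with the $2$-edge it would give a rational cyclic $54$-isogeny, and there is none. So the $3$-part of the graph is a path of length two. With exactly one $2$-edge at every vertex, this forces the $2\times 3$ grid $R_6$.

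For the $R_6$ torsion labels, replace ``at most the first two columns'' with a Weil-pairing argument:
\begin{itemize}
\item Suppose a corner $E_1$ has a rational $3$-point $P$. Then $E_3[3]$ has two distinct Galois-stable lines: $E_1[3]/\langle P\rangle\cong\mu_3$ and $\ker(E_3\to E_5)$.
\item Since $\det=\chi_3$, the second line is a trivial Galois module, so $E_3$ also has a rational $3$-point.
\item $E_5$ has none, since its only $3$-line is $E_3[3]/\ker(E_3\to E_5)\cong\mu_3$.
\end{itemize}
This forces exactly the pattern $([6],[6],[6],[6],[2],[2])$ rather than merely ``at most'' two columns. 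With these changes your outline does produce the stated list.
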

Our goal is to present Weierstrass models for the curves that have maximal 2-adic image (i.e. [2.3.0.1]) given the presence of a rational 2-isogeny. One could in theory also do this for 2-adic images for curves with CM using the classification given in \cite{ChiloyanCM} and for 3-adic images using the classification given in \cite{Rakvi}. We note that there exists prior work in the literature \cite{MayleRakvi} of finding characterizations for elliptic curves with adelic Galois representation as large as possible, given a constraint on the image modulo 2. Although our work focuses on individual 2-adic images, these adelic results are relevant to our setting and provide a broader perspective on the maximality of the Galois images we consider.

\begin{definition}
    Let $\mathcal{E}$ be a $\Q$-isogeny class of elliptic curves. If every curve in $\mathcal{E}$ is $\ell$-maximal, then we say $\mathcal{E}$ is $\ell$-maximal.
\end{definition}

\noindent In other words, we can restate the main focus of this article as finding models of elliptic curves whose isogeny class $\mathcal{E}$ is 2-maximal. Using Theorem \ref{chiloyan}  above, we know which torsion-subgroups can occur, and which $\Q$-isogeny classes are possible, thereby simplifying our analysis. \\

\noindent \textbf{Computational Software and AI Use.} All the computations performed in this article were done using Magma \cite{bosma1997magma}, or SageMath \cite{sagemath}. Some computational code was developed with assistance from Gemini 3.1 Pro. Some of the algebraic calculations appearing in the results of this article were initially obtained with assistance from Gemini 3.1 Pro, and subsequently independently checked by the author. The author also used ChatGPT 5.6 Sol to check grammar and improve the exposition. The author takes full responsibility for the content in this manuscript.

\section{Preliminaries}

In this section we discuss some facts about modular curves that will help guide the philosophy of most of the results in this article. Let $k$ be a field of characteristic 0. For an integer $N \geq 2$ the non-cuspidal $k$-rational points on the modular curve $X_0(N)$ parameterize $\overline{k}$-isomorphism classes of pairs $(E,C)$, where $E$ is an elliptic curve defined over $k$, and $C$ is a $\Gal_k$-stable cyclic subgroup of $E[N](\overline{k})$ of order $N$. Equivalently, $C$ is the kernel of a $k$-rational isogeny $E \to E'$ of degree $N$. Similarly, the non-cuspidal $k$-rational points on the modular curve $X_1(N)$ correspond to $\overline{k}$-isomorphism classes of pairs $(E,P)$, where $E$ is an elliptic curve over $k$, and $P$ is a point of order $N$ defined over $k$. 

The $\Q$-rational points on $X_0(N)$ have been studied extensively in the literature, culminating in a complete classification for all $N$. One of the landmark results in the classification is when $N$ is prime, due to Mazur \cite{Mazur1978}. The rest of the classification was completed in work by Fricke, Kenku, Klein, Kubert, Ligozat, Mazur and Ogg, among others. The results are spread out in the literature, but one can find them all collected in Tables 3 and 4 in \cite{LozanoRobledo}. When the genus of $X_0(N)$ is at least 2, Faltings' theorem implies that $X_0(N)(\Q)$ is finite. When the genus of $X_0(N)$ is 1, finiteness of its rational points depends on its Mordell-Weil rank. In contrast, when the genus of $X_0(N)$ is zero, there are infinitely many $\overline{\Q}$-isomorphism classes of elliptic curves whose $j$-invariants are parameterized by the image of a rational map. Concretely, when the genus is zero, we have $X_0(N)(\Q) \cong \P^1(\Q)$ and the $j$-map $j_N: X_0(N) \to \P^1(\Q)$ gives the $j$-invariant for an elliptic curve with a rational $N$-isogeny. For instance, one can construct an elliptic curve with a rational 2-isogeny by finding the $j$-map for $X_0(2)$, which is 
\begin{align*}
    j_2(t) = \frac{(256-t)^3}{t^2},
\end{align*}
and then construct the elliptic curve
\begin{align*}
    y^2 + xy = x^3 + 36\frac{t^2}{t^3 + 960t^2 + 196608t -
    16777216}x + \frac{t^2}{t^3 + 960t^2 + 196608t - 16777216},
\end{align*}
with $j_2$ as its $j$-invariant. For all but finitely many values of $t \in \Q$, we obtain an elliptic curve defined over $\Q$ with a rational 2-isogeny. The $\Q$-rational points on $X_1(N)$ were completely classified by Mazur \cite{mazur1977modular}. Concretely, he proved that $X_1(N)(\Q)$ has non-cuspidal points if and only if $N \in \{1,2,3,4,5,6,7,8,9,10,12\}$. For our purposes, all the modular curves we will deal with have genus 0, and thus we will have infinitely many $\overline{\Q}$-isomorphism classes of elliptic curves whose $j$-invariants are parameterized by the image of some $j$-map.

\section{Elliptic Curves with a Rational 2-Isogeny}

\noindent In this section, we will give models of curves that are 2-maximal. Due to the classification in \cite{Chiloyan2adic}, we only need to find curves that  belong to the isogeny classes of type $L_2(2)$, $R_4(6)$, $R_4(10)$, or $R_6$. Let $t \in \Q \setminus \{0,64,256,-512\}$. Let $E_t$ be the elliptic curve over $\Q$ defined by the Weierstrass equation
\begin{align}\label{ECwith2-isog}
   E_t \colon y^2 = x^3 - \left( \frac{t - 256}{2(t + 512)} \right)x^2 + \left( \frac{t(t - 256)^2}{16(t - 64)(t + 512)^2} \right)x.
\end{align}
One can clearly see that $E_t$ has a 2-torsion point at $(0,0)$, and so we have that $\Z/2\Z \hookrightarrow E_t(\Q)$. This curve is obtained by constructing the elliptic curve with $j$-invariant equal to the $j$-map for $X_0(2)$. Note that this model looks different from the one we mentioned before, and this is due to the change of variables $(x, y) \longmapsto \left( x + \frac{t}{4(t + 512)}, y + \frac{1}{2}x \right)$, so that the 2-torsion point is at $(0,0)$. 

\begin{remark}
    This curve may attain extra 2-torsion or odd-degree torsion for certain values of $t$.
\end{remark}

\begin{remark}
    One can in theory use the standard notation for an elliptic curve with a rational 2-torsion point at $(0,0)$, namely
    \begin{align*}
        E_{a,b} \colon y^2 = x^3 + ax^2 + bx. 
    \end{align*}
    However, our calculations in Theorem \ref{L_2-2max} are more conveniently carried out using a one-parameter family, since we only need to impose restrictions on a single parameter rather than two. One can find parameterizations of elliptic curves defined over $\Q$ with prescribed torsion subgroup that use more than one parameter, for example, in \cite{Kubert, barrios2022minimal}.
\end{remark}



\noindent We want to find conditions on $t$ such that the curve $E_t$ described by (\ref{ECwith2-isog}) has 2-adic image [2.3.0.1]. To do this, take the $j$-invariant of this curve which is
$$ j(E_t) = \dfrac{(256-t)^3}{t^2}. $$
For any $t \in \Q \setminus \{0,64,256,-512\}$, the 2-adic image of $E_t$ will be contained in a subgroup conjugate to [2.3.0.1]. We need to make sure it does not land in any of the possible smaller subgroups. Checking the \href{https://users.wfu.edu/rouseja/2adic/}{auxiliary website} associated to \cite{rzb2adic}, we find 12 such possible subgroups and their corresponding $j$-maps. We need to find conditions on $t$ that will guarantee no rational solutions to the arising equations. Doing so we arrive at the following result.

\begin{table}[H]
    \centering
    \begin{tabular}{|c|c|}
        \hline Subgroup & $j$-map \\ \hline
        [2.6.0.1] & $\frac{(t^2 + 192)^3}{(t-8)^2(t+8)^2}$ \\ \hline
        [4.6.0.3] & $\frac{256(t^2+3)^3}{t^2+4}$ \\ \hline
        [4.6.0.5] & $-\frac{64(t^2-3)^3}{(t^2+1)^2}$ \\ \hline
        [4.6.0.4] & $-\frac{(t^2-256)^3}{t^4}$ \\ \hline
        [4.6.0.2] & $\frac{(t^2+256)^3}{t^4}$ \\ \hline
        [4.6.0.1] & $\frac{(t^2 + 16t + 16)^3}{t(t+16)}$\\ \hline
        [8.6.0.2] & $\frac{128(t^2+2)^3}{t^4}$\\ \hline
        [8.6.0.4] & $\frac{64(t^2+6)^3}{t^2+8}$\\ \hline
        [8.6.0.1] & $\frac{64(t^2-6)^3}{t^2-8}$\\ \hline
        [8.6.0.6] & $\frac{32(t^2+6)^3}{(t^2-2)^2}$\\ \hline
        [8.6.0.3] & $-\frac{128(t^2-2)^3}{t^4}$\\ \hline
        [8.6.0.5] & $-\frac{32(t^2-6)^3}{(t^2+2)^2}$\\ \hline
    \end{tabular}
    \caption{Maximal subgroups of [2.3.0.1] and their corresponding $j$-maps.}
    \label{subgroups of [2.3.0.1]}
\end{table}

\begin{theorem} \label{L_2-2max}
    Let $E_t$ be an elliptic curve given as in equation (\ref{ECwith2-isog}). If for all $d \in \{ \pm 1, \pm 2\}$ and $f(t) \in \{ t, t-64, t(t-64) \}$, the rational number $d \cdot f(t)$ is not a perfect square in $\Q$, then $E_t$ is 2-maximal.
\end{theorem}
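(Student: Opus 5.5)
The plan is to use the structure of the maximal subgroups of $[2.3.0.1]$. For $t\in\Q\setminus\{0,64,256,-512\}$ the 2-adic image of $E_t$ already lies in a conjugate of $[2.3.0.1]$, so $E_t$ fails to be 2-maximal exactly when its image is a proper subgroup. In that case the image lies in one of the twelve maximal subgroups $H$ listed in Table \ref{subgroups of [2.3.0.1]}. Here I use that the RZB list is closed under adding $-I$: if the image does not contain $-I$, then its closure under $-I$ is still a proper subgroup of $[2.3.0.1]$, since $[2.3.0.1]$ contains $-I$, and so it lies in some listed $H$. Since each listed $H$ contains $-I$, the condition ``image contained in $H$'' depends only on $j(E_t)$, because quadratic twisting does not change the image modulo $\pm I$. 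Hence $E_t$ is 2-maximal unless $j(E_t)=(256-t)^3/t^2$ equals $j_H(s)$ for some $H$ in the table and some $s\in\Q$. It therefore suffices to show that, for each of the twelve $j$-maps $j_H$, a rational solution $s$ forces one of the twelve numbers $d\cdot f(t)$ to be a square.

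Each such $H$ has index $2$ in $[2.3.0.1]$, so $X_H\to X_0(2)$ is a double cover of genus-$0$ curves. Such a cover is cut out by adjoining a square root of a function on $X_0(2)$, i.e.\ by $\Q(\sqrt{g(t)})$ with $g\in\Q(t)^\times/(\Q(t)^\times)^2$. For each $H$ I would compute the curve
\[
\frac{(256-t)^3}{t^2}=j_H(s)
\]
in Magma and factor it. Since $j_2$ has degree $3$ and $j_H$ has degree $6$, one expects a degree-$2$ factor in $s$ over $\Q(t)$. The aim is to show that this factor, after completing the square, is a conic $s'^2=d\,f(t)$ with $d\in\{\pm1,\pm2\}$ and $f\in\{t,\,t-64,\,t(t-64)\}$. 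This matches the expected arithmetic. The discriminant of $E_t$ is, up to squares, $t-64$, which gives $\Q(E_t[2])$. The $2$-isogenous curve contributes $t$. The determinant gives the twists by $\Q(i)$, $\Q(\sqrt2)$ and $\Q(\sqrt{-2})$, i.e.\ the values $d=-1,2,-2$. These combine to give the $4\cdot 3=12$ pairs $(d,f)$, matching the twelve subgroups. One should also check that the remaining cofactor of the fiber product has no rational points with $t$ outside the excluded set, or that it corresponds to a point on $X_H$ mapping to $X_0(2)$ via the other $2$-isogeny. In the latter case one applies the Atkin--Lehner involution $t\mapsto 4096/t$, which interchanges $t$ and $t\cdot(\text{square})$ and sends $t-64$ to $-64(t-64)/t$ up to squares. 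This keeps the set of twelve conditions invariant.

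The main obstacle is this last step. One must ensure that every rational point of $X_H$ lying over $j(E_t)$ actually comes from the parameter value $t$, rather than from some other preimage of $j(E_t)$ in $X_0(2)$. Away from CM $j$-values, $j(E_t)$ has exactly three preimages on $X_0(2)$, corresponding to the three $2$-isogenies, and only $t$ is rational unless $E_t$ has full rational $2$-torsion. So I would handle the case of full rational $2$-torsion separately. That case is exactly $t-64$ being a square, which is excluded by hypothesis with $d=1$. Away from it, the rational point on $X_H$ must lie over $t$, and then the explicit factorization gives $d\,f(t)\in(\Q^\times)^2$, contradicting the hypothesis. The finitely many $t$ where the covers ramify (for instance $t=0$ or $t=64$) are already excluded from the domain of $t$.
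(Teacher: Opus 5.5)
Your proposal follows the paper's proof. For each of the twelve $j$-maps, you solve $j_H(X)=(256-t)^3/t^2$ for the degree-two relation between $X$ and $t$ and read off $X^2=d\,f(t)\cdot(\text{square})$. Your extra observation is a useful addition: a rational point of $X_H$ over $j(E_t)$ must lie over $t$ itself unless $E_t$ has full rational $2$-torsion, and this fills a step the paper skips, since it silently discards the other factors of that equation.

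There are three harmless slips. First, $\Delta_{E_t}\equiv 64-t$ modulo squares (compare the paper's $[2.6.0.1]$ relation $X^2=64-t$). So full $2$-torsion is ruled out by the hypothesis with $d=-1$, not $d=1$. Second, your $-I$ remark should rest on $-I=\left(\begin{smallmatrix}1&-1\\2&-1\end{smallmatrix}\right)^2$ being a square in $[2.3.0.1]$, so that it lies in every index-$2$ subgroup; merely containing $-I$ does not stop $\pm G$ from equalling $[2.3.0.1]$. Third, the Atkin--Lehner aside can be dropped: it relates $E_t$ to its isogenous curve, not to other points of $X_0(2)$ over $j(E_t)$, and your final paragraph already handles those correctly.
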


\begin{proof}
    We compare the target $j$-map for [2.3.0.1] with each of the maximal subgroups listed in Table \ref{subgroups of [2.3.0.1]}. For each subgroup, we obtain an equation whose rational solutions correspond to values of $t$ for which the 2-adic image is contained in that maximal subgroup. The aim is to find restrictions on $t$ so that there are no rational solutions to the resulting equations for all 12 maximal subgroups. We let $X$ denote the variable for the ``new" subgroup $j$-map. \\
    \textbf{The case of [2.6.0.1]}. \\
    We are looking to find when the equation 
    \begin{align*}
        \dfrac{(X^2 + 192)^3}{(X^2 - 64)^2} = \dfrac{(256-t)^3}{t^2},
    \end{align*}
    has no rational solutions. We can relate $X$ and $t$ via $X^2 = 64 - t$, which tells us that for a rational solution to exist, we need $X = \pm \sqrt{64 - t}$. This implies that in order to avoid this subgroup, the condition we need to impose is that $64 - t$ is not a square. \\
    \textbf{The case of [4.6.0.3]}. \\
    We are looking to find when the equation 
    \begin{align*}
        \dfrac{256(X^2 + 3)^3}{X^2 + 4} = \dfrac{(256-t)^3}{t^2},
    \end{align*}
    has no rational solutions. We can relate $X$ and $t$ via the equation $t = \frac{256}{X^2 + 4}$. This implies that 
    \begin{equation*}
        X^2 = \frac{256}{t} - 4 = \frac{256-4t}{t} = \frac{4(64-t)}{t}
    \end{equation*} 
    In order to ensure that $X$ is not rational, we simply need $t(64-t)$ to not be a square. \\
    \textbf{The case of [4.6.0.5]}. \\
    We are looking to find when the equation 
    \begin{align*}
        -\dfrac{64(X^2 - 3)^3}{(X^2 + 1)^2} = \dfrac{(256-t)^3}{t^2},
    \end{align*}
    has no rational solutions. We can relate $X$ and $t$ via the equation $t = 64(X^2 + 1)$. We can rewrite this as $X^2 = \frac{t}{64} - 1 = \frac{t -64}{64}$. Thus, in order to prevent $X$ from being rational, all we need is that $t - 64$ is not a perfect square. \\
    \textbf{The case of [4.6.0.4]}. \\
    We are looking to find when the equation 
    \begin{align*}
        -\dfrac{(X^2 - 256)^3}{X^4} = \dfrac{(256-t)^3}{t^2},
    \end{align*}
    has no rational solutions. One can easily see that in this case the equation relating $X$ and $t$ is $X^2 = t$. Hence, to avoid this subgroup, we need to impose that $t$ is not a perfect square. \\
    \textbf{The case of [4.6.0.2]}. \\
    We are looking to find when the equation 
    \begin{align*}
        \dfrac{(X^2 + 256)^3}{X^4} = \dfrac{(256-t)^3}{t^2},
    \end{align*}
    has no rational solutions. Again, one can easily see here that the equation relating $X$ and $t$ is $t = -X^2$. To avoid this subgroup we ask that $-t$ is not a square in $\Q$. \\
    \textbf{The case of [4.6.0.1]}. \\
    We are looking to find when the equation 
    \begin{align*}
        \dfrac{(X^2 + 16X + 16)^3}{X(X + 16)} = \dfrac{(256-t)^3}{t^2},
    \end{align*}
    has no rational solutions. We can relate $X$ and $t$ via the equation $t = -\frac{4096}{X^2 + 16X}$. For this equation we have that the rational solutions for $X$ will satisfy the quadratic equation
    \begin{align*}
        t X^2 + 16t X+ 4096 = 0.
    \end{align*}
    This quadratic will have rational solutions if its discriminant is a perfect square. The discriminant is given by $\Delta = 256(t^2-64t)$, and so to ensure that $X$ is not rational we impose that $t^2-64t$ is not a perfect square. \\
    \textbf{The case of [8.6.0.2]}. \\
    We are looking to find when the equation 
    \begin{align*}
        \dfrac{128(X^2 + 2)^3}{X^4} = \dfrac{(256-t)^3}{t^2},
    \end{align*}
    has no rational solutions. We can relate $X$ and $t$ via the relation $t = -128X^2$. This implies that $X^2 = -\frac{t}{128}$, and $X$ will be rational if $-2t$ is a perfect square. \\
    \textbf{The case of [8.6.0.4]}. \\
    We are looking to find when the equation 
    \begin{align*}
        \dfrac{64(X^2 + 6)^3}{X^2 + 8} = \dfrac{(256-t)^3}{t^2},
    \end{align*}
    has no rational solutions. We can relate $X$ and $t$ via the equation $t = \frac{512}{X^2+8}$. Solving for $X^2$ we obtain $X^2 = \frac{512}{t}-8 = \frac{512-8t}{t} = \frac{8(64-t)}{t}$. Thus, to ensure that $X$ is not rational we need to impose that $2t(64-t)$ is not a perfect square. \\
    \textbf{The case of [8.6.0.1]}. \\
    We are looking to find when the equation 
    \begin{align*}
        \dfrac{64(X^2 - 6)^3}{X^2 - 8} = \dfrac{(256-t)^3}{t^2},
    \end{align*}
    has no rational solutions. The equation relating $X$ and $t$ is $t = -\frac{512}{X^2-8}$. Solving for $X^2$ we obtain $X^2 = - \frac{512}{t} - 8 = - \frac{512 - 8t}{t} = -\frac{8(64-t)}{t}$. Hence, to avoid rational solutions we require that $-2t(64-t)$ is not a perfect square. \\
    \textbf{The case of [8.6.0.6]}. \\
    We are looking to find when the equation 
    \begin{align*}
        \dfrac{32(X^2 + 6)^3}{(X^2 - 2)^2} = \dfrac{(256-t)^3}{t^2},
    \end{align*}
    has no rational solutions. We relate $X$ and $t$ using the equation $t = -32(X^2-2)$. Solving for $X^2$ gives $X^2 = -\frac{t}{32} + 2 = \frac{(64-t)}{32}$, which implies that to avoid rational solutions we need to impose that $2(64-t)$ is not a perfect square. \\
    \textbf{The case of [8.6.0.3]}. \\
    We are looking to find when the equation 
    \begin{align*}
        \dfrac{-128(X^2 - 2)^3}{X^4} = \dfrac{(256-t)^3}{t^2},
    \end{align*}
    has no rational solutions. The relationship between $X$ and $t$ in this case is $t = 128X^2$. Thus, we have that $X^2 = \frac{t}{128}$, which means that to avoid having rational solutions we need that $2t$ is not a perfect square. \\
    \textbf{The case of [8.6.0.5]}. \\
    We are looking to find when the equation 
    \begin{align*}
        \dfrac{-32(X^2 - 6)^3}{(X^2 + 2)^2} = \dfrac{(256-t)^3}{t^2},
    \end{align*}
    has no rational solutions. The equation relating $X$ and $t$ is given by $t = 32(X^2+2)$. Similarly as before, this implies that $X^2 = \frac{t}{32} - 2 = \frac{t-64}{32}$, and so to prevent rational solutions from existing here we need that $2(t-64)$ is not a perfect square.
\end{proof}

\begin{example}
    Let $t = \frac{64}{103}$. One can verify that $t$ satisfies the conditions stated in Theorem \ref{L_2-2max} and so we define the elliptic curve
    \begin{align*}
        E_\frac{64}{103} \colon y^2 = x^3 + \frac{137}{550}x^2 - \frac{1876}{123420000}x.
    \end{align*}
    This curve is 2-maximal and its rational torsion subgroup is isomorphic to $\Z/2\Z$. Moreover, this curve belongs to an isogeny class of type $L_2(2)$.
\end{example}

While this theorem gives sufficient conditions on the parameter $t$ required for $E_t$ to be 2-maximal, it does not fully control the size or structure of the isogeny class of $E_t$. For instance, depending on the arithmetic of $E_t$, one could obtain an isogeny-torsion graph such as $L_2(2)$, or a significantly larger graph resulting from additional rational prime-degree isogenies. The goal of the remainder of this paper is to systematically distinguish between all of these possible cases. The first step in this classification is to determine whether $E_t$ admits a rational 3-isogeny and whether $E_t$ admits a rational 5-isogeny. If the answer is ``No” to both, then there are no additional rational prime-degree isogenies, and the isogeny class is of type $L_2(2)$. If the answer is ``Yes” to one, then it must be ``No” to the other, as these two conditions are mutually exclusive for non-CM elliptic curves with a rational 2-isogeny. In Proposition \ref{rational3isog} and Proposition \ref{rational5isog}, we show how to answer these two questions.

\begin{prop}\label{rational3isog}
    Let $j(t) = \frac{(256-t)^3}{t^2}$ and $p_3(u) = u^4 + 36u^3 + 270u^2 + (756 - j(t))u + 729$. If in addition to the assumptions stipulated in Theorem \ref{L_2-2max}, we also have that $p_3(u)$ has no rational root, then $E_t$ will have no rational 3-isogeny.
\end{prop}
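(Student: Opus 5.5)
The plan is to use the modular curve $X_0(3)$, which has genus zero, and its $j$-map, exactly as Theorem \ref{L_2-2max} uses $X_0(2)$. A standard Hauptmodul $u$ on $X_0(3)$ has $j$-map
\begin{align*}
    j_3(u) = \frac{(u+27)(u+3)^3}{u}.
\end{align*}
Every non-cuspidal rational point of $X_0(3)$ with $j$-invariant different from $0$ and $1728$ comes from some $u\in\Q\setminus\{0\}$ via $j_3$. So a non-CM elliptic curve $E/\Q$ has a rational 3-isogeny if and only if $j(E)=j_3(u)$ for some $u \in \Q^\times$.

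The first step is purely algebraic. Expanding $(u+27)(u+3)^3$ gives $u^4+36u^3+270u^2+756u+729$. Clearing the denominator $u$ in $j_3(u)=j(t)$ therefore gives exactly $p_3(u)=0$. Moreover $u=0$ is never a root of $p_3$, since $p_3(0)=729$, so rational roots of $p_3$ correspond precisely to rational points $u\ne 0$ on $X_0(3)$ lying over $j(t)$.

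The second step handles the twist and CM issues. Having a rational 3-isogeny depends only on the $\overline{\Q}$-isomorphism class of $E_t$ as long as $j(E_t)\neq 0,1728$. Twisting by a quadratic (or, when $j=0,1728$, higher-order) character multiplies the image by scalars, which preserves Borel subgroups of $\text{GL}_2(\mathbb{F}_3)$. Hence if $E_t$ had a rational 3-isogeny, then $j(t)=j_3(u)$ for some rational $u\neq 0$, giving a rational root of $p_3$, which contradicts the hypothesis. The hypotheses of Theorem \ref{L_2-2max} are invoked only to place us in the non-CM, 2-maximal setting, and we note they exclude nothing further.

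The main obstacle is the exceptional values $j(t)\in\{0,1728\}$, where the correspondence between points of $X_0(3)$ and isomorphism classes breaks down. For $j=0$ we would have $t=256$, which is excluded from the parameter space. For $j=1728$ we would need $(256-t)^3=1728t^2$, i.e.\ $t=64$ or $t=4096$ (factor as $(t-64)^2(t-4096)$ up to sign). Here $t=64$ is excluded, and $t=4096=64^2$ is a square, which violates the hypothesis of Theorem \ref{L_2-2max} with $d=1$, $f(t)=t$. So the exceptional $j$-values never occur, and the correspondence argument applies to every admissible $t$.
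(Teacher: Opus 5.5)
Your argument is the same as the paper's. A rational 3-isogeny gives a non-cuspidal rational point on $X_0(3)$, i.e.\ some $u\in\Q^\times$ with $\frac{(u+3)^3(u+27)}{u}=j(t)$. Clearing the denominator is exactly $p_3(u)=0$.

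Your extra paragraph on $j\in\{0,1728\}$ is not needed. The implication you use is that a pair $(E_t,C)$ defined over $\Q$ gives a $\Q$-rational point of $X_0(3)$, and this holds for every $j$; the exceptional $j$-values only obstruct the converse.

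That paragraph also contains a computational slip. In fact $(256-t)^3-1728t^2=-(t-64)(t+512)^2$, so $j(t)=1728$ exactly for $t\in\{64,-512\}$. Both are already excluded from the parameter set, so your conclusion survives. The value $t=4096$ you give actually yields $j=-3375$.

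Your parenthetical claim that quartic or sextic twists act on the mod $3$ image by scalars is also false. For $j=1728$, $[i]^2=-1$ and $-1$ is not a square in $\mathbb{F}_3$, so $[i]$ does not act on $E[3]$ as a scalar. This claim is never actually used in your argument.
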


\begin{proof}
    A rational 3-isogeny would give a non-cuspidal rational point on $X_0(3)$, hence a parameter $u \in \Q$ satisfying 
    \begin{equation*}
        j_3(u)= \frac{(u+3)^3(u+27)}{u} = j(E_t),
    \end{equation*} 
    which is equivalent to $p_3(u)=0$. Thus, if $p_3$ has no rational root, $E_t$ has no rational 3-isogeny.
\end{proof}

\begin{prop}\label{rational5isog}
    Let $j(t) = \frac{(256-t)^3}{t^2}$ and $p_5(v) = (v^2 + 10v + 5)^3 - j(t)v$. If in addition to the assumptions stipulated in Theorem \ref{L_2-2max}, we also have that $p_5(v)$ has no rational root, then $E_t$ will have no rational 5-isogeny.
\end{prop}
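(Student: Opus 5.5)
The plan mirrors the proof of Proposition \ref{rational3isog}, with $X_0(5)$ in place of $X_0(3)$. Since $X_0(5)$ has genus $0$, I will use a standard Hauptmodul $v$ on it. Its $j$-map is
\begin{equation*}
    j_5(v) = \frac{(v^2+10v+5)^3}{v},
\end{equation*}
and I will record this (or cite the tables in \cite{LozanoRobledo}). The cusps are $v=0$ and $v=\infty$. Every other point of $X_0(5)(\Q)$ corresponds to a $\overline{\Q}$-isomorphism class of pairs $(E,C)$, where $C$ is a $G_\Q$-stable cyclic subgroup of order $5$, as explained in the Preliminaries.

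Suppose, for contradiction, that $E_t$ has a rational $5$-isogeny with kernel $C$. Then $(E_t,C)$ gives a non-cuspidal point of $X_0(5)(\Q)$, so there is some $v\in\Q\setminus\{0\}$ with $j_5(v)=j(E_t)=j(t)$. Two points need care here. First, the moduli interpretation is only up to $\overline{\Q}$-isomorphism, so I must check that a twist of $E_t$ cannot lose the isogeny. This holds because the existence of a rational cyclic $5$-isogeny depends only on $j$ once $j\neq 0,1728$. The reason is that a quadratic twist acts on $E[5]$ by a scalar character $\pm1$, which preserves every $G_\Q$-stable line. Second, I must rule out $j(t)\in\{0,1728\}$. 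That is where the hypotheses of Theorem \ref{L_2-2max} enter: they give that $E_t$ is non-CM with $2$-adic image exactly [2.3.0.1]. If one prefers a direct check, $j=0$ forces $t=256$, which is excluded. The value $j=1728$ is a CM value, which the non-CM standing assumption excludes (equivalently, it would force extra structure contradicting 2-maximality).

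Once $j(t)\neq 0,1728$, clearing the denominator $v\neq 0$ in $j_5(v)=j(t)$ gives $(v^2+10v+5)^3-j(t)v=0$, that is, $p_5(v)=0$ with $v\in\Q$. This contradicts the hypothesis that $p_5$ has no rational root, so $E_t$ has no rational $5$-isogeny.

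The only step I expect to need real attention is the twist-invariance and $j\neq 0,1728$ issue above. The algebraic reduction to $p_5$ is immediate.
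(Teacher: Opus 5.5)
Your proof is correct and is essentially the paper's argument: a rational $5$-isogeny with kernel $C$ gives a non-cuspidal point of $X_0(5)(\Q)$, hence some $v\in\Q^\times$ with $j_5(v)=j(t)$, which is a rational root of $p_5$. Your discussion of twists and of $j(t)\notin\{0,1728\}$ is harmless but not needed in this direction, because the pair $(E_t,C)$ is itself fixed by $G_\Q$ and so directly gives a rational point on $X_0(5)$; incidentally, $j(t)=1728$ happens exactly when $t\in\{64,-512\}$, and those values are already excluded.
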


\begin{proof}
    A rational 5-isogeny would give a non-cuspidal rational point on $X_0(5)$, hence a parameter $v \in \Q$ satisfying 
    \begin{equation*}
        j_5(v)= \frac{(v^2+10v+5)^3}{v} = j(E_t),
    \end{equation*} 
    which is equivalent to $p_5(v)=0$. Thus, if $p_5$ has no rational root, $E_t$ has no rational 5-isogeny.
\end{proof}

\noindent Propositions \ref{rational3isog} and \ref{rational5isog} are easy to check for specific values of $t$ using Magma. We note that if $p_3(u) = 0$ has a rational solution, then $E_t$ may sit inside an isogeny class of type $R_4(6)$ or $R_6$. Similarly, if $p_5(v) = 0$ has rational solutions, then we obtain an isogeny class of size 4 with possible torsion graphs $([2],[2],[2],[2])$ or $([10],[10],[2],[2])$. From this point on, we will assume our elliptic curves have a rational 2-isogeny, and the aim is to find models for curves that also have a rational $\ell$-isogeny for $\ell \in \{3,5\}$.

\section{Elliptic Curves with a Rational 3-Isogeny}

\noindent In the previous section we described a model for an elliptic curve defined over $\Q$ with a rational 2-torsion point. We also gave criteria for when that curve was 2-maximal, and had no other rational $\ell$-isogenies for $\ell \in \{3,5 \}$. Here we start with the model for a curve with a rational 2-isogeny and rational 3-isogeny. Note that having a rational 3-isogeny does not force $E$ to have a rational 3-torsion point, so we will need to have different models for both cases. From the classification given in \cite{Chiloyan2adic}, we know that these curves can arise in four different situations:
\begin{itemize}
    \item $E$ has a rational 6-isogeny and does not have a rational 3-torsion point,
    \item $E$ has a rational 6-isogeny and a rational 3-torsion point,
    \item $E$ has a rational 9-isogeny,
    \item $E$ has two independent rational 3-isogenies.
\end{itemize}

\noindent Note that the first two cases are completely disjoint, but it is possible, for example, that an elliptic curve with a rational 6-isogeny also has a rational 9-isogeny. We will give one-parameter models for the first two cases, and then present what extra conditions need to be true to be in the third and fourth cases.

\subsection{Elliptic Curves with a Rational 6-Isogeny}

\begin{prop}\label{X_0(6)curve}
    Let $t \in \Q \setminus \{-4, -1, 0, 8\}$. Let $E_t$ be the elliptic curve defined by 
    \begin{align*}
        E_t \colon y^2 + xy = x^3 - \frac{36 t^2(t-8)^6(t+1)^3}{B(t)^2}x - \frac{t^2(t-8)^6(t+1)^3}{B(t)^2},
    \end{align*}
    where $B(t) = t^6 - 516t^5 - 12072t^4 - 24640t^3 - 30720t^2 + 6144t + 4096$.  Then $E_t$ has a rational 6-isogeny.
\end{prop}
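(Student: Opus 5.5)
The plan is to reduce the statement to an identity of $j$-invariants and then use the moduli interpretation of $X_0(2)$ and $X_0(3)$ from the Preliminaries.

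\textbf{Step 1: the $j$-invariant.} Write $a(t) = \frac{t^2(t-8)^6(t+1)^3}{B(t)^2}$, so that $E_t$ has the shape $y^2+xy = x^3 - 36a\,x - a$. For this family the standard computation gives $c_4 = 1+1728a$ and $\Delta = a^2(1+1728a)$. Hence
\begin{align*}
j(E_t) = \frac{c_4^3}{\Delta} = \frac{1}{a(t)} + 1728 = \frac{B(t)^2 + 1728\,t^2(t-8)^6(t+1)^3}{t^2(t-8)^6(t+1)^3}.
\end{align*}
This is the same device used for the $X_0(2)$ model in the Preliminaries. It produces a curve over $\Q$ with prescribed $j$-invariant $j \neq 0,1728$.

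\textbf{Step 2: non-degeneracy.} Next I will check that $E_t$ is a genuine elliptic curve for every $t \in \Q \setminus \{-4,-1,0,8\}$. I will use the rational root test to show that $B(t)$ has no rational roots, so $a(t)$ is defined. The factors $t$, $t-8$, $t+1$ vanish exactly at the excluded points $0,8,-1$, so $a(t) \neq 0$ elsewhere. I also need $1+1728a(t) \neq 0$, i.e. $j \neq 0$. I expect $B^2+1728t^2(t-8)^6(t+1)^3$ to factor as $(t+4)^3$ times a cube, which would explain the exclusion of $-4$, and to have no other rational roots. Under these conditions $\Delta \neq 0$, and $j(E_t) \neq 0,1728$.

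\textbf{Step 3: rational points on $X_0(2)$ and $X_0(3)$.} The core step is to exhibit rational functions $s(t)$ and $u(t)$ in $\Q(t)$ with
\begin{align*}
j(E_t) = \frac{(256-s)^3}{s^2} = \frac{(u+3)^3(u+27)}{u},
\end{align*}
i.e. $j(E_t) = j_2(s(t)) = j_3(u(t))$, using the $j$-maps recalled in Section 2 and in Proposition \ref{rational3isog}. I would find these by factoring the numerator and denominator of $j(E_t)$. The denominator should split as $s^2$ times the correct factors, and the numerator of $j-1728$ is the visible square $B(t)^2$, which is the expected shape on the $X_0(2)$ side. Equivalently, I can take the classical Fricke hauptmodul $j_6$ of $X_0(6)$ and find the Möbius change of variable matching it with $j(E_t)$. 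This gives both $s$ and $u$ at once, since $X_0(6)$ covers $X_0(2)$ and $X_0(3)$. \emph{This explicit polynomial identity is the main obstacle.} I would locate the parameter change by matching the poles ($t=0,8,-1$ and $\infty$, with their multiplicities) to the cusps of $X_0(6)$, and then verify the identity by direct expansion, for example in Magma.

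\textbf{Step 4: from $j$-invariants to a $6$-isogeny.} Given Step 3, for each admissible $t$ the values $s(t)$ and $u(t)$ give non-cuspidal rational points on $X_0(2)$ and $X_0(3)$ with $j$-invariant $j(E_t)$. So some curve $E'$ over $\Q$ with $j(E') = j(E_t)$ has rational cyclic subgroups of orders $2$ and $3$. Since $j \neq 0,1728$, $E_t$ is a quadratic twist of $E'$. A quadratic twist changes the Galois action on $E[N]$ only by the scalar character $\pm 1$, so it preserves $\Gal$-stable subgroups. Hence $E_t$ has $\Gal$-stable cyclic subgroups $C_2$ and $C_3$. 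Then $C_2 \oplus C_3$ is $\Gal$-stable and cyclic of order $6$, because the orders are coprime. Therefore $E_t$ has a rational $6$-isogeny.
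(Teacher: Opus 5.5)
Your proposal is correct. The paper's entire proof is the sentence that $E_t$ ``comes from the $j$-map associated to $X_0(6)$''. That sentence presupposes that $j(E_t)$ is the $j$-map of $X_0(6)$ in some rational hauptmodul $t$. It also leaves implicit that this particular $\Q$-model, determined by its $j$-invariant only up to quadratic twist, still carries the $6$-isogeny. The paper's route is immediate once that identification is accepted, and it presents the family as the universal one over $X_0(6)$ up to twist.

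Your main line instead factors through $X_0(2)$ and $X_0(3)$. It needs only the two $j$-maps the paper already quotes, two identities in $\Q(t)$ checkable by expansion, twist-invariance of Galois-stable subgroups, and cyclicity of $C_2\oplus C_3$. So no hauptmodul for $X_0(6)$ has to be identified, and your Step 2 also accounts for the excluded value $t=-4$, which is the point over $j=0$.

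The functions you are after are
\[
s(t)=-\frac{t(t-8)^3}{(t+1)^3},\qquad u(t)=\frac{729\,t^2}{(t+1)(t-8)^2}.
\]
The paper itself uses $s(t)$ in the proof of Theorem \ref{X_0(6)-2max}. Both functions have the zero/pole pattern your cusp-matching predicts, with zeros and poles only at $t\in\{0,8,-1,\infty\}$. Hence they give non-cuspidal points for every admissible $t$. Also, as you guessed, $B(t)^2+1728\,t^2(t-8)^6(t+1)^3=(t+4)^3(t^3+228t^2+48t+64)^3$. The cubic factor has no rational root, since its only candidates are the negative divisors of $64$ and none of them is a root.

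The points $s(t)\in X_0(2)(\Q)$ and $u(t)\in X_0(3)(\Q)$ may a priori be represented by two different $\Q$-models $E'$ and $E''$. Your wording slightly conflates them, but you apply the twist argument to each separately, so this is harmless.

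One slip to fix in Step 1. For $y^2+xy=x^3-36a\,x-a$ the discriminant is $\Delta=a(1+1728a)^2$, not $a^2(1+1728a)$. With your value, $c_4^3/\Delta$ would be $(1+1728a)^2/a^2$. Your formula $j=1728+1/a$ and the non-degeneracy criterion ($a\neq 0$ and $1+1728a\neq 0$) are unaffected.
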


\begin{proof}
    The curve $E_t$ comes from the $j$-map associated to $X_0(6)$.
\end{proof}

It is important to note that not every elliptic curve over $\Q$ with a rational 6-isogeny is isomorphic over $\Q$ to a curve $E_t$ in this family where $t \in \Q$. It is true, however, that there will exist some $t$ such that the $j$-invariants are equal, and thus such a statement would only hold true ``up to twists". We now proceed to find conditions on the parameter $t$ so that the curve $E_t$ is 2-maximal.

\begin{theorem}\label{X_0(6)-2max}
    Let $E_t$ be as in Proposition \ref{X_0(6)curve} above. If for all $d \in \{\pm 1, \pm2 \}$ and $f(t) \in \{t+1, t(t-8), t(t-8)(t+1) \}$, the rational number $d \cdot f(t)$ is not a perfect square in $\Q$, then $E_t$ is 2-maximal.
\end{theorem}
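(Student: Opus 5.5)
The plan is to reduce to Theorem \ref{L_2-2max} rather than redo the comparison with all twelve maximal subgroups in Table \ref{subgroups of [2.3.0.1]}. A rational 6-isogeny contains a rational 2-isogeny. So the point of $X_0(6)$ attached to $E_t$ maps, under the forgetful map $X_0(6)\to X_0(2)$, to a rational point of $X_0(2)$. Concretely, there is a rational function $s=s(t)\in\Q(t)$ of degree $3$ (the index of $\Gamma_0(6)$ in $\Gamma_0(2)$) with
\begin{align*}
    \frac{(256-s(t))^3}{s(t)^2} = j(E_t).
\end{align*}
The 2-adic image depends only on the $j$-invariant up to quadratic twist. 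Also, every subgroup in Table \ref{subgroups of [2.3.0.1]} is detected purely through $j$-maps, exactly as in the proof of Theorem \ref{L_2-2max}. Hence it suffices to show that the hypotheses on $t$ imply the hypotheses of Theorem \ref{L_2-2max} for the parameter $s(t)$.

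\textbf{Step 1: compute $s(t)$.} I would compute $j(E_t)$ from the given model, which should be the $X_0(6)$ $j$-map, and set it equal to $(256-s)^3/s^2$. Clearing denominators gives a quartic in $s$ over $\Q(t)$; I would factor it over $\Q(t)$ and extract the linear factor. The shape of the model suggests that $s(t)$ and $s(t)-64$ factor, up to constants and squares, into powers of $t$, $t-8$, $t+1$ divided by a square such as $(t+4)^2$ or $B$-related factors. This is consistent with the excluded values $t\in\{-4,-1,0,8\}$, which should be exactly the preimages of the cusps and bad values $s\in\{0,64,256,-512\}$.

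\textbf{Step 2: square classes.} Next I would write $s(t)=c_1 g_1(t)\,h_1(t)^2$ and $s(t)-64=c_2 g_2(t)\,h_2(t)^2$ with $g_i$ squarefree products of $t$, $t-8$, $t+1$. The hypothesis set $\{t+1,\ t(t-8),\ t(t-8)(t+1)\}$ strongly suggests one of two matchings. Either $s\equiv t+1$ and $s-64\equiv t(t-8)(t+1)$ modulo squares, so that $s(s-64)\equiv t(t-8)$; or some permutation of these, absorbing the constants $c_i$ into $d\in\{\pm1,\pm2\}$ up to rational squares. Then ``$d\,f(t)$ is never a square'' for the three $f$ translates verbatim into ``$d\,s$, $d(s-64)$, $d\,s(s-64)$ are never squares''. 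Theorem \ref{L_2-2max} then gives that $E_t$ (or a twist with the same $j$-invariant, which has the same 2-adic image up to the $\pm1$ ambiguity already built into the labels) is 2-maximal.

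\textbf{Main obstacle.} The hard step is Step 1 together with the constant bookkeeping in Step 2. I need the factorization of $s(t)$ and $s(t)-64$ to involve only $t$, $t-8$, $t+1$ to odd powers, with the leading constants being $\pm1$ or $\pm2$ times squares. If a stray factor such as $3$ appeared, the set of $d$'s would have to change. I would first try the standard Hauptmodul relation: with $X_0(6)$ parameter $t$, the $X_0(2)$ parameter is typically a ratio like $c\,t^a(t-8)^b/(t+4)^k$ or similar. I would verify it in Magma/Sage by substituting into $j_2$ and checking equality with $j(E_t)$. As a fallback, I would mirror the proof of Theorem \ref{L_2-2max} directly. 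That means equating $j(E_t)$ with each of the twelve $j$-maps and reading off the quadratic relation $X^2=\text{(rational function of }t)$ in each case; this is longer but mechanical once $s(t)$ is known.
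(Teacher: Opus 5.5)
Your proposal is correct and follows the paper's proof: the paper finds $s(t)=-t(t-8)^3/(t+1)^3$ with $(256-s)^3/s^2=j(E_t)$, then reduces the conditions of Theorem \ref{L_2-2max} on $s$, $s-64$ and $s(s-64)$ modulo squares, exactly as in your Steps 1--2. (Two harmless slips: the map $X_0(6)\to X_0(2)$ has degree $4$, not $3$, and $(256-s)^3=j\,s^2$ is cubic in $s$; since $s-64=-(t^2+20t-8)^2/(t+1)^3$, the matching modulo squares is $s\equiv -t(t-8)(t+1)$, $s-64\equiv -(t+1)$, $s(s-64)\equiv t(t-8)$, one of the permutations you anticipated.)
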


\begin{proof}
    The $j$-invariant of $E_t$ is given by:
    \begin{align*}
        j(E_t) = \frac{B(t)^2 + 1728t^2(t-8)^6(t+1)^3}{t^2(t-8)^6(t+1)^3},
    \end{align*}
    and since we know that this curve has a rational 2-isogeny, there must exist some function $s(t)$ satisfying $\frac{(256-s(t))^3}{s(t)^2} = j(E_t)$. After some calculations via SageMath, we find $s(t) = \frac{-t(t-8)^3}{(t+1)^3}$. Now, we proceed as in the proof of Theorem \ref{L_2-2max}, we need $d \cdot f(t)$ where $d \in \{\pm1, \pm 2\}$ and $f(t) \in \{s(t), s(t)-64, s(t)(s(t)-64)\}$ to be nonsquares in $\Q$ for rational $t$. After removing the square terms in $s(t) = \frac{-t(t-8)^3}{(t+1)^3}$, the square-free condition comes down to whether $-t(t-8)(t+1)$ is a square or not, which is what appears in our theorem statement. A similar calculation with $s(t)-64$ and $s(t)(s(t)-64)$ gives the other two polynomials in $t$.
\end{proof}

\begin{example}
    Let $t = \frac{2}{7}$. One can verify that $t$ satisfies the conditions stipulated in Theorem \ref{X_0(6)-2max}. Thus, the elliptic curve
    \begin{align*}
        E_\frac{2}{7} \colon y^2 + xy = x^3 - \frac{45927}{252004}x - \frac{5103}{1008016},
    \end{align*}
    is 2-maximal, has rational torsion subgroup isomorphic to $\Z/2\Z$ and belongs to an isogeny class of type $R_4(6)$.
\end{example}

\noindent By Proposition \ref{X_0(6)curve} and Theorem \ref{X_0(6)-2max}, these conditions guarantee when $E_t$ will have a rational 2-isogeny and a rational 3-isogeny, and be 2-maximal. By Chiloyan's classification \cite{Chiloyan2adic}, this can only happen when $E_t$ is in an isogeny class of size 4 or size 6. The following propositions determine what extra conditions need to be satisfied in order for $E_t$ to sit in an isogeny class of size 6, as well as what position it occupies on the graph.

\begin{prop}\label{X_0(6) and rational9isog}
    Let $E_t$ be as in Proposition \ref{X_0(6)curve} above. If in addition to the conditions stipulated in Theorem \ref{X_0(6)-2max}, we also have that
    \begin{equation*}
        (s^3 + 9s^2 + 27s + 3)^3 (s^3 + 9s^2 + 27s + 27) - j(E_t) \cdot s(s^2 + 9s + 27) = 0
    \end{equation*}
    has a rational solution, then $E_t$ has a rational 9-isogeny and sits in one of the corners of an isogeny graph of size 6. Moreover, if a 3-isogenous curve to $E_t$ has a rational 3-torsion point, then the torsion graph is $([6],[6],[6],[6],[2],[2])$. Otherwise, the torsion graph is $([2],[2],[2],[2],[2],[2])$.
\end{prop}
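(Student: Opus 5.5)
The plan has three steps: read the displayed equation as a rational point on $X_0(9)$, then use Theorem \ref{chiloyan} to pin down the shape of the isogeny graph, and finally locate $E_t$ in it and read off the torsion.

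\textbf{Step 1: a rational 9-isogeny.} The genus-zero curve $X_0(9)$ has Hauptmodul $s$ with $j$-map
\begin{equation*}
j_9(s)=\frac{(s^3+9s^2+27s+3)^3(s^3+9s^2+27s+27)}{s(s^2+9s+27)}.
\end{equation*}
Clearing denominators, the displayed equation is exactly $j_9(s)=j(E_t)$. As in Propositions \ref{rational3isog} and \ref{rational5isog}, a rational root $s_0$ gives a non-cuspidal rational point on $X_0(9)$ with $j$-invariant $j(E_t)$. The zeros of $s(s^2+9s+27)$ are cusps and give no solutions, since $j(E_t)$ is finite.

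The moduli interpretation only works up to twist. A point of $X_0(9)(\Q)$ with $j$-invariant $j(E_t)$ gives some curve $E'/\Q$ with that $j$-invariant and a rational cyclic 9-isogeny. Because $E_t$ is non-CM, $j(E_t)\neq 0,1728$, so $E'$ is a quadratic twist of $E_t$. Quadratic twisting is multiplication by a character valued in $\pm1$, which preserves every Galois-stable subgroup. Hence $E_t$ itself has a rational cyclic 9-isogeny.

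\textbf{Step 2: the shape of the graph.} By Proposition \ref{X_0(6)curve}, $E_t$ also has a rational 6-isogeny. By Theorem \ref{X_0(6)-2max}, it is 2-maximal, so its 2-adic image is [2.3.0.1]. Theorem \ref{chiloyan} therefore puts the isogeny class in the list $L_2(2)$, $R_4(6)$, $R_4(10)$, $R_6$.

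Composing the 2-isogeny with the 9-isogeny gives a rational cyclic 18-isogeny, so the class contains curves linked by cyclic isogenies of degrees $2$ and $9$. Neither $L_2(2)$ nor $R_4(\cdot)$ contains a 9-isogeny, since their 3-part has length one. The class must therefore be $R_6$.

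\textbf{Step 3: position of $E_t$ and torsion.} In $R_6$, the 3-power part is a path $E_1-E_3-E_5$ (and $E_2-E_4-E_6$). Only the endpoints admit a cyclic 9-isogeny; the middle vertices $E_3,E_4$ have two independent 3-isogenies instead. So $E_t$ is one of the corners $E_1,E_2,E_5,E_6$.

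For the torsion graph, Theorem \ref{chiloyan} allows only $([2],\dots,[2])$ or $([6],[6],[6],[6],[2],[2])$. These are told apart by whether some curve in the class has a rational 3-torsion point. Suppose $E_t$ is a corner and its 3-isogenous neighbour (the middle vertex) has a rational point of order $3$. Then some curve has a point of order $3$, so the graph is not all $[2]$, and it must be the $[6]$ graph. Conversely, in the $[6]$ graph the four $[6]$ vertices form two adjacent columns, and one of them must include a middle vertex. If that column is on the side of $E_t$'s 3-neighbour, the criterion holds. Otherwise the argument has to use the 2-isogeny between the middle vertices $E_3$ and $E_4$: in the $[6]$ configuration both $E_3$ and $E_4$ carry 3-torsion, because an odd-order rational point maps injectively under a 2-isogeny. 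Then the middle vertex adjacent to $E_t$ has a rational 3-torsion point.

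\textbf{Main obstacle.} The hard part is Step 3. It requires the precise form of Chiloyan's $R_6$ torsion configuration, namely that the $[6]$ vertices always include both middle vertices. That fact has to be extracted from \cite{Chiloyan2adic}. The supporting argument is that 3-torsion passes through 2-isogenies, and that an endpoint of the 3-path with 3-torsion forces its neighbour to have it too. The latter holds because, if $E_1$ has a point of order $3$, the other 3-isogeny kernel of $E_3$ carries a trivial character.
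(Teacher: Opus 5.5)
Your proposal is correct and follows the same route as the paper: read the displayed equation as $j_9(s)=j(E_t)$ for the Hauptmodul of $X_0(9)$ (the paper writes this as the composite of $u(s)=s(s^2+9s+27)$ with the $X_0(3)$ $j$-map), then invoke the classification to force type $R_6$, a corner position, and the torsion dichotomy. Your extra details are sound and fill in what the paper leaves to the citation: the quadratic-twist argument, why the middle vertices admit no cyclic 9-isogeny, and the character computation showing that 3-torsion anywhere in the class forces 3-torsion at both middle vertices. That last argument already settles your ``main obstacle'' on its own, so the case split in Step 3 about which column lies on which side of $E_t$ is unnecessary.
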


\begin{proof}
    This follows directly by equating the $j$-invariant of $E_t$ with the $j$-map for $X_0(9)$ and asking for rational solutions. The degree-12 parameterization of $X_0(9)$ factors through $X_0(3)$ as $j(s) = j(u(s))$, where the map $X_0(9) \to X_0(3)$ is given by $u(s) = s(s^2+9s+27)$, and the map $X_0(3) \to X(1)$ is given by $j(u) = \frac{(u+3)^3(u+27)}{u}$. Substituting $u(s)$ into $j(u)$ and setting it equal to $j(E_t)$ gives the desired equation after clearing denominators. The existence of a rational root $s$ confirms $E_t$ possesses a rational 9-isogeny. Because $E_t$ also possesses a rational 2-isogeny, this 9-isogeny forces the isogeny graph containing this curve to have size 6 and for $E_t$ to sit in a corner of it. (See the picture in Theorem \ref{chiloyan} cases (6) and (7)). Via the classification given by \cite{chiloyanalvaro}, the torsion graph structure then follows depending on whether a 3-isogenous neighbor $E'$ contains a rational 3-torsion point or not.
\end{proof} 

\begin{example}
    Let $t = -\frac{56}{65}$. One can verify that this value of $t$ satisfies the conditions in Theorem \ref{X_0(6)-2max} and Proposition \ref{X_0(6) and rational9isog}. We construct the elliptic curve
    \begin{align*}
        E_{-\frac{56}{65}} \colon y^2 + xy = x^3 - \frac{30057431040}{187765439779009}x -
\frac{834928640}{187765439779009}.
    \end{align*}
    This curve is 2-maximal, has rational torsion subgroup isomorphic to $\Z/2\Z$, and belongs to an isogeny class of type $R_6$. Moreover, the curve possesses a rational 9-isogeny, and thus sits in one of the corners of its isogeny graph.
\end{example}

\begin{prop}\label{X_0(6) and two independent 3isog}
    Let $E_t$ be as in Proposition \ref{X_0(6)curve} above. Let the polynomials $c_0(t), c_1(t),$ and $c_2(t)$ be defined as follows:
    {\smaller \begin{align*}
        c_0(t) =& -\frac{t^2(t - 8)^4(t + 1)^2(t^9 + 198t^8 + 170676t^7 + 2196264t^6 + 8217792t^5 + 11388672t^4 + 1339392t^3 + 1253376t^2 + 589824t + 262144)}{B(t)^3}, \\
        c_1(t) =& -\frac{9t^2(t - 8)^2(t + 1)(t^6 - 840t^5 - 7212t^4 - 40192t^3 - 51456t^2 + 6144t + 4096)}{B(t)^2}, \\
        c_2(t) =& -\frac{27t^2(7t^3 + 138t^2 + 336t + 448)}{B(t)}.
    \end{align*}}
    If $P_3(x) = x^3 + c_2(t)x^2 + c_1(t)x + c_0(t)$ has a rational root, then $E_t$ has two independent rational 3-isogenies and sits in the middle of an isogeny graph of size 6. Moreover, if $E_t$ has a rational 3-torsion point, then the torsion graph is $([6],[6],[6],[6],[2],[2])$. Otherwise, the torsion graph is $([2],[2],[2],[2],[2],[2])$.
\end{prop}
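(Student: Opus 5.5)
The plan is to read $P_3(x)$ as the cofactor polynomial of the $3$-division polynomial of $E_t$, after the known rational $3$-isogeny has been split off.

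\textbf{Step 1: identify $P_3$.} Write $E_t$ in short Weierstrass form (or keep $a_1=1$ and use the general formula). The $3$-division polynomial $\psi_3(x)$ is then a quartic in $x$. Its roots are the $x$-coordinates of the four cyclic subgroups of order $3$. Since $E_t$ comes from $X_0(6)$ by Proposition \ref{X_0(6)curve}, it has a rational $3$-isogeny with kernel $C$. The $x$-coordinate $x_C(t)$ of the nonzero points of $C$ is therefore a rational function of $t$, and I would find it explicitly, e.g.\ by computing a Vélu kernel polynomial from the $X_0(6)$ parametrization. Then I would check symbolically in SageMath that
\begin{equation*}
\psi_3(x) = 3\,(x - x_C(t))\,P_3(x).
\end{equation*}
The coefficients $c_0,c_1,c_2$ in the statement, which have denominators $B(t)^3, B(t)^2, B(t)$, are exactly what this division produces. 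This identity is the computational core. I expect it to be the main obstacle only in bookkeeping: one must match the model of Proposition \ref{X_0(6)curve}, including the $a_1=1$ term shifting $x$, with the normalization used for $c_i(t)$.

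\textbf{Step 2: deduce a second $3$-isogeny.} Suppose $x_0\in\Q$ is a root of $P_3$. Then $x_0$ is the $x$-coordinate of a point $P$ of order $3$ whose subgroup $\langle P\rangle=\{O,P,-P\}$ differs from $C$. For $\sigma\in G_\Q$ we have $\sigma(P)$ with the same $x$-coordinate, so $\sigma(P)=\pm P$. Hence $\langle P\rangle$ is Galois-stable, and it is the kernel of a second rational $3$-isogeny. Two distinct order-$3$ subgroups of $E[3]\cong(\Z/3\Z)^2$ are independent. So $\rho_{E_t,3}(G_\Q)$ lies in a split Cartan subgroup, and $E_t$ has two independent rational $3$-isogenies, which gives a rational cyclic $9$-isogeny $E_1\to E_t\to E_2$ through $E_t$. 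Combined with the rational $2$-isogeny and the $2$-maximality from Theorem \ref{X_0(6)-2max}, Theorem \ref{chiloyan} leaves only the type $R_6$ for the isogeny class. In $R_6$, the vertices with two outgoing $3$-edges are the middle column, which is where $E_t$ sits.

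\textbf{Step 3: torsion graph.} Theorem \ref{chiloyan} allows only $([2],\dots,[2])$ or $([6],[6],[6],[6],[2],[2])$ for $R_6$. By the classification of \cite{chiloyanalvaro}, in the second graph the $[6]$ vertices form the middle and one side column. So the middle curve $E_t$ has $\Z/6\Z$ torsion exactly when it has a rational $3$-torsion point. If $E_t$ has no rational $3$-torsion point, the graph cannot be the second one, and it must be $([2],[2],[2],[2],[2],[2])$. To confirm the first case, I would use that a rational $3$-torsion point generates one of the two Galois-stable subgroups. In the split Cartan situation, $E_t[3]$ splits as $\mu_3\oplus\Z/3\Z$ up to characters. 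This pins down which column carries $6$-torsion, in agreement with the tables of \cite{Chiloyan2adic}.
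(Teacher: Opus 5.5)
Your Steps 1 and 2 follow the paper's proof. The paper likewise factors $\psi_3$ of the $a_1=1$ model over $\Q(t)$ into a linear factor, coming from the known kernel, times $P_3$. It then turns a rational root of $P_3$ into a second Galois-stable cyclic subgroup of order $3$, hence a second rational $3$-isogeny. Your $\sigma(P)=\pm P$ argument is the content of the appeal to \cite{silverman2009arithmetic}.

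The one thing to repair is how you identify the graph. You invoke Theorem~\ref{chiloyan} together with the $2$-maximality from Theorem~\ref{X_0(6)-2max}. Unlike Proposition~\ref{X_0(6) and rational9isog}, this proposition does not assume the conditions of Theorem~\ref{X_0(6)-2max}. Theorem~\ref{chiloyan} only applies to curves already known to have $2$-adic image [2.3.0.1], so as written your argument proves the statement only for those $t$.

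The paper instead uses the unconditional classification of isogeny-torsion graphs in \cite{chiloyanalvaro}. A class containing a $2$-isogeny and a cyclic $9$-isogeny must be of type $R_6$, since rational cyclic $36$-, $54$-, $90$- and $126$-isogenies do not occur over $\Q$. The only $R_6$ torsion graphs are the two listed, and in the second one the middle curves carry $[6]$. With that citation in place of Theorem~\ref{chiloyan}, your Steps 2 and 3 go through unchanged and no $2$-maximality is needed.
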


\begin{proof}
    By construction via the $j$-map associated to $X_0(6)$, the elliptic curve $E_t$ possesses both a rational 2-isogeny and a rational 3-isogeny. The existence of a rational 3-isogeny guarantees that the 3-division polynomial of $E_t$, denoted $\psi_3(x)$, factors over $\mathbb{Q}(t)$ into a linear term and a cubic factor, $P_3(x)$. We can explicitly compute the 3-division polynomial for $E_t$ using Magma:
    \begin{align*}
            \psi_3(x) = 3x^4 + x^3 - 216 C(t) x^2 - 12 C(t) x - C(t) - 1296 C(t)^2,
    \end{align*}
    where $C(t) = \frac{t^2(t-8)^6(t+1)^3}{B(t)^2}$. Factoring this polynomial over $\mathbb{Q}(t)$ gives us the cubic $P_3(x)$ as defined in the statement. Now, assume $P_3(x)$ has a rational root. This provides a second, distinct rational $x$-coordinate for a 3-torsion point on $E_t$ that gives a cyclic subgroup of order 3, say $C_2$. Because the $x$-coordinate of its generator is rational, the subgroup $C_2$ is invariant under the action of the absolute Galois group $G_{\Q}$. By \cite[Prop. III.4.12]{silverman2009arithmetic}, a finite Galois-invariant subgroup corresponds to an isogeny defined over the base field. Thus, $E_t$ has an additional, independent rational 3-isogeny. Since $E_t$ has a rational 2-isogeny and two independent rational 3-isogenies, the classification of isogeny-torsion graphs in \cite{chiloyanalvaro} gives us that the isogeny class has size 6 and $E_t$ sits in the middle of the corresponding isogeny graph. Moreover, the same classification tells us that depending on whether $E_t$ has a rational 3-torsion point, the isogeny-torsion graphs will either be $([6],[6],[6],[6],[2],[2])$ or $([2],[2],[2],[2],[2],[2])$.
\end{proof}

\begin{example}
    Let $t = \frac{8}{27}$. One can check that $t$ satisfies all the conditions required by Theorem \ref{X_0(6)-2max} and Proposition \ref{X_0(6) and two independent 3isog}. We construct the elliptic curve
    \begin{align*}
        E_\frac{8}{27} \colon y^2 + xy = x^3 -
\frac{12873910506912000}{55215681703109929}x - \frac{357608625192000}{55215681703109929}.
    \end{align*}
    This curve is 2-maximal, has rational torsion subgroup isomorphic to $\Z/2\Z$ and sits in the middle column of an isogeny graph of type $R_6$.
\end{example}

\subsection{Elliptic Curves with a Rational 3-Torsion Point}

\begin{prop}\label{X_1(6) curve}
    Let $t \in \Q \setminus \{0,-1,-\frac{1}{9}\}$ and define the elliptic curve
    \begin{align*}
        E_t \colon y^2 + (1-t)xy - (t^2+t)y = x^3 - (t^2+t)x^2.
    \end{align*}
    Then $\Z/6\Z \hookrightarrow E_t(\Q)_{\text{tor}}$. 
\end{prop}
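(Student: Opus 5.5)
The plan is to recognize the model as the Tate normal form
\[
E(b,c)\colon y^2 + (1-c)xy - by = x^3 - bx^2
\]
with $c = t$ and $b = t^2 + t = c + c^2$. We then show directly, by the chord-and-tangent law, that the point $P = (0,0)$ has exact order $6$. Clearly $P \in E_t(\Q)$, since both sides vanish at $(0,0)$.

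First we check that $E_t$ is an elliptic curve for the allowed $t$. For the Tate normal form one computes $\Delta(b,c) = b^3\,(16b^2 - 8bc^2 - 20bc + b + c(c-1)^3)$. Substituting $b = c(c+1)$ and factoring should give
\[
\Delta = t^6 (t+1)^3 (9t+1)
\]
(a routine computation, to be confirmed in Magma/SageMath). Hence $\Delta \neq 0$ exactly when $t \notin \{0,-1,-\tfrac19\}$, which are precisely the excluded values.

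Next we compute small multiples of $P$. The tangent line at $(0,0)$ is $y = 0$. Intersecting with the curve gives $x^3 - bx^2 = 0$, so the third intersection point is $(b,0)$. Its negative under $(x,y)\mapsto (x,-y-(1-c)x+b)$ is $2P = (b, bc)$. Similarly, the vertical line through $P$ meets the curve again at $-P = (0,b)$. The line through $P$ and $2P$ is $y = cx$. Substituting into the curve, the third root is $x = c$, giving the point $(c, c^2)$, whose negative is
\[
3P = (c,\, -c^2 - (1-c)c + b) = (c,\, b - c).
\]
Since $b \neq 0$, the points $P$, $2P$, $3P$ are all affine, so none of them is $O$. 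Thus the order of $P$ is not $1$, $2$, or $3$.

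Finally, $P$ has order $6$ exactly when $3P$ is a nonzero $2$-torsion point, i.e. when $3P = -3P$. This is equivalent to
\[
2y + (1-c)x - b = 0 \quad \text{at } (c,\, b-c).
\]
That expression equals $2(b-c) + (1-c)c - b = b - c - c^2$, which vanishes identically because $b = t^2 + t$. Hence $6P = O$, so the order of $P$ divides $6$, and by the previous step it is exactly $6$. This gives $\Z/6\Z \hookrightarrow E_t(\Q)_{\text{tor}}$.

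The only step needing real care is the discriminant factorization. It is what guarantees nonsingularity and matches the excluded set; the group-law computations are mechanical.
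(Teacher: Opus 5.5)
Your proposal is correct and takes the same route as the paper. The paper's proof consists only of the assertion that $(0,0)$ has order $6$; you supply the chord-and-tangent verification that it omits ($2P=(b,bc)$, $3P=(c,b-c)$, and $3P=-3P$ because $b=c+c^2$). Your discriminant factorization $\Delta=t^6(t+1)^3(9t+1)$ also checks out: it justifies the excluded values, which the paper only addresses in the remark after the proposition.
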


\begin{proof}
    The point $(0,0)$ on $E_t$ is a point of order 6. 
\end{proof}

\begin{remark}
    This is the standard Tate normal form parameterizing elliptic curves equipped with a rational point of order 6 that can be found, for example, in \cite{largetorsion}. The restriction on the values of $t$ ensures that $E_t$ is nonsingular.
\end{remark}

\noindent We now determine the necessary conditions on $t$ so that $E_t$ is 2-maximal.

\begin{theorem}\label{X_1(6) 2max}
    Let $E_t$ be as in Proposition \ref{X_1(6) curve} above. If for all $d \in \{\pm 1, \pm 2\}$ and $f(t) \in \{ t, (t+1)(9t+1), t(t+1)(9t+1)\}$, the rational number $d \cdot f(t)$ is not a square in $\Q$, then $E_t$ is 2-maximal.
\end{theorem}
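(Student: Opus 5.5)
The plan is to reduce to Theorem \ref{L_2-2max}, exactly as in the proof of Theorem \ref{X_0(6)-2max}. Since $E_t$ has the rational $2$-torsion point $T = 3\cdot(0,0)$, it has a rational $2$-isogeny. So there is a rational function $s(t)\in\Q(t)$ with
$$\frac{(256-s(t))^3}{s(t)^2} = j(E_t).$$
The $2$-adic image of $E_t$ depends only on $j(E_t)$ away from $j\in\{0,1728\}$, because quadratic twisting does not change the image modulo $\pm I$ in a way that affects membership in the groups of Table \ref{subgroups of [2.3.0.1]}; this is implicit in how those $j$-maps are used. Hence $E_t$ is $2$-maximal as soon as $s(t)$ satisfies the hypotheses of Theorem \ref{L_2-2max}. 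That is, we need that no $d\cdot g$ is a rational square, for $d\in\{\pm1,\pm2\}$ and $g\in\{s,\ s-64,\ s(s-64)\}$.

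To find $s(t)$ I would avoid solving a high-degree equation directly and instead exploit the shape of model (\ref{ECwith2-isog}). There $a=-\frac{s-256}{2(s+512)}$ and $b=\frac{s(s-256)^2}{16(s-64)(s+512)^2}$, so
$$\frac{b}{a^2}=\frac{s}{4(s-64)}, \qquad\text{equivalently}\qquad s=\frac{256\,r}{4r-1},\quad r=b/a^2.$$
First I compute the point $T=3(0,0)$ on the Tate normal form. Next I complete the square in $y$ and translate $T$ to $(0,0)$, giving a model $y^2=x^3+A(t)x^2+B(t)x$. Then I set $r=B/A^2$, which is invariant under the scalings $x\mapsto u^2x$. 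This yields $s(t)$ as an explicit rational function. I would then check the identity $(256-s)^3/s^2=j(E_t)$ against the known $j$-invariant of the order-$6$ Tate curve, whose discriminant is $t^6(t+1)^3(9t+1)$.

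The last step is bookkeeping of square classes. I factor $s(t)$, $s(t)-64$ and $s(t)(s(t)-64)$ in $\Q(t)$ and strip all even powers and square constants. The remaining square-free parts should be, up to constants in $\{\pm1,\pm2\}$ that are absorbed by the range of $d$, the three polynomials $t$, $(t+1)(9t+1)$ and $t(t+1)(9t+1)$. The third class is consistent with being the product of the first two, and the fact that $t(t+1)(9t+1)$ carries the non-square part of the discriminant is a useful sanity check.

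The main obstacle is the explicit computation of $s(t)$ and the factorization of $s-64$. One must verify that $s-64$ factors as a square times $(t+1)(9t+1)$, up to a constant. This would be done in SageMath or Magma, as in the previous theorem. One must also make sure that the leftover constant factors really lie in $\{\pm1,\pm2\}$ modulo squares, so that the twelve conditions of Theorem \ref{L_2-2max} become exactly the twelve conditions stated.
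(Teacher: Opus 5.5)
This is correct and is essentially the paper's proof: you produce $s(t)$ with $(256-s)^3/s^2=j(E_t)$ and transfer the twelve conditions of Theorem \ref{L_2-2max}. Your one real deviation is obtaining $s$ from the invariant $r=B/A^2$ rather than solving for it in SageMath, and this gives $r=-16t^3/(3t^2-6t-1)^2$, hence the paper's $s=4096t^3/\bigl((t+1)^3(9t+1)\bigr)$ and $s-64=-64(3t^2-6t-1)^2/\bigl((t+1)^3(9t+1)\bigr)$, with square classes $t(t+1)(9t+1)$, $-(t+1)(9t+1)$, $-t$ as you predicted.

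Two small fixes. First, the full $2$-adic image is not a function of $j$, but $2$-maximality is: $-I$ is a square in [2.3.0.1] (e.g.\ $-I=\bigl(\begin{smallmatrix}1&1\\-2&-1\end{smallmatrix}\bigr)^2$), hence lies in every index-$2$ subgroup, so no quadratic twist can cut the image down to an index-$2$ subgroup omitting $-I$. Second, the square class of $\Delta=t^6(t+1)^3(9t+1)$ is $(t+1)(9t+1)$, the class of $64-s$ (the full $2$-torsion condition), not $t(t+1)(9t+1)$.
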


\begin{proof}
    The $j$-invariant of $E_t$ is given by:
    \begin{align*}
        j(E_t) = \frac{(9t^4+12t^3+30t^2+12t+1)^3}{t^6(t+1)^3(9t+1)},
    \end{align*}
    and since this curve has a rational 2-isogeny, there exists some function $s(t)$ such that we have $\frac{(256-s(t))^3}{s(t)^2} = j(E_t)$. Using SageMath, we find that $s(t) = \frac{4096t^3}{(t+1)^3(9t+1)}$. Now we simply proceed again as in Theorem \ref{L_2-2max}. Removing the square terms in $s(t)$ leaves us with $t(t+1)(9t+1)$, which is one of the polynomials stipulated in our theorem. We get the other two polynomials  from a similar calculation using $s(t) - 64$ and $s(t)(s(t)-64)$.
\end{proof}

\begin{example}
    Let $t = \frac{17}{55}$. One can verify that $t$ satisfies the conditions required by Theorem \ref{X_1(6) 2max}. We define the elliptic curve
    \begin{align*}
        E_\frac{17}{55} \colon y^2 + \frac{38}{55}xy - \frac{1224}{3025}y = x^3 - \frac{1224}{3025}x^2.
    \end{align*}
    This curve is 2-maximal, has rational torsion subgroup isomorphic to $\Z/6\Z$, and belongs to an isogeny class of type $R_4(6)$.
\end{example}

\noindent As per the classification in \cite{Chiloyan2adic}, we know that a 2-maximal curve with a rational 2-isogeny and a rational 3-isogeny can either be in an isogeny class of type $R_4(6)$ or $R_6$. The following propositions determine what other conditions we require on $t$ so that the curve sits in an isogeny graph of type $R_6$, and its location on the corresponding isogeny graph.

\begin{prop}\label{X_1(6) and rational 9isog}
    Let $E_t$ be as in Proposition \ref{X_1(6) curve} above. If we have that 
    \begin{equation*}
        (s^3 + 9s^2 + 27s + 3)^3 (s^3 + 9s^2 + 27s + 27) - j(E_t) \cdot s(s^2 + 9s + 27) = 0
    \end{equation*}
    has a rational solution, then $E_t$ has a rational 9-isogeny and sits in one of the corners of an isogeny graph of size 6. Moreover, the torsion graph is $([6],[6],[6],[6],[2],[2])$.
\end{prop}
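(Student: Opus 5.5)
The plan mirrors the proof of Proposition \ref{X_0(6) and rational9isog}, with one extra step to pin down the torsion graph. First I will identify the displayed equation with the moduli problem for $X_0(9)$. The degree-12 $j$-map of $X_0(9)$ factors as $j(u(s))$, where $u(s)=s(s^2+9s+27)$ is the map $X_0(9)\to X_0(3)$ and $j(u)=\frac{(u+3)^3(u+27)}{u}$. A direct expansion shows
$$(u+3)^3(u+27)=(s^3+9s^2+27s+3)^3(s^3+9s^2+27s+27),$$
since $u+3=s^3+9s^2+27s+3$ and $u+27=(s+3)^3$. Clearing the denominator $u(s)$, the displayed equation is exactly $j_9(s)=j(E_t)$.

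Next, a rational root $s$ that is not a cusp gives a non-cuspidal rational point on $X_0(9)$ with $j$-invariant $j(E_t)$. I need $j(E_t)\neq 0,1728$, which holds because $E_t$ has a point of order 6 and is non-CM. Under this condition, a point of $X_0(9)(\Q)$ lifts to a rational cyclic 9-isogeny on every quadratic twist with that $j$-invariant, so in particular on $E_t$ itself. This is the same reasoning as in Proposition \ref{X_0(6) and rational9isog}. Since $E_t$ also has a rational 2-isogeny, it has a rational cyclic 18-isogeny. By the classification in \cite{chiloyanalvaro}, the isogeny class is then of type $R_6$, and $E_t$ occupies a corner vertex, namely an endpoint of a $9$-isogeny chain.

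For the torsion graph, I will argue that \emph{no} additional 2-maximality hypothesis is needed. By Proposition \ref{X_1(6) curve}, $E_t$ itself has rational torsion containing $\Z/6\Z$, so the isogeny-torsion graph has a vertex labelled at least $[6]$. The classification of $R_6$ torsion graphs in \cite{chiloyanalvaro} lists, for the non-CM case, only the configurations $([2],[2],[2],[2],[2],[2])$, $([6],[6],[6],[6],[2],[2])$ and $([6],[6],[6],[6],[2],[2])$-type variants with $[2]$ at the far corners. The presence of a $\Z/6\Z$ vertex excludes the all-$[2]$ graph, leaving $([6],[6],[6],[6],[2],[2])$. As a consistency check, the rational 3-torsion point lies in the kernel of the 3-isogeny that starts the 9-chain, which is compatible with $E_t$ being a $[6]$-corner.

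The main obstacle is the step from the root $s$ to an isogeny on $E_t$ itself rather than only on a twist, together with excluding $s$ values that are cusps ($s=0$ or roots of $s^2+9s+27$, which are irrational). I would handle this by invoking the standard fact that for $j\neq0,1728$ the set of cyclic subgroups of given order that are Galois-stable is twist-invariant. The second delicate point is confirming from \cite{chiloyanalvaro} that no $R_6$ graph has torsion $[6]$ at a corner other than the stated pattern. If needed, I would cite the table there directly rather than re-derive it.
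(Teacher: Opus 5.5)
Your proposal is correct and follows the paper's argument: you identify the equation with $j_9(s)=j(E_t)$ through $u(s)=s(s^2+9s+27)$, combine the resulting 9-isogeny with the 2-isogeny to place $E_t$ at a corner of an $R_6$ graph, and use the $\Z/6\Z$ torsion of $E_t$ with the classification in \cite{chiloyanalvaro} to exclude the all-$[2]$ graph; your cusp and twist checks are extra care the paper omits. One correction: $j(E_t)\neq 0$ does not follow from the point of order 6, because $t=-\frac{1}{3}$ gives $E_t\cong y^2=x^3+1$ with $j=0$, where $s=-3$ is a rational root of the displayed equation but there is no rational 9-isogeny (the isogeny class has size 4), so the paper's standing non-CM assumption (here $t\neq-\frac{1}{3}$) is genuinely needed, precisely at your twist step.
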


\begin{proof}
    This argument is identical to that in Proposition \ref{X_0(6) and rational9isog}.
\end{proof} 

\begin{example}
    Let $t = \frac{1}{124}$. One can verify that this value of $t$ satisfies the conditions stipulated in Theorem \ref{X_1(6) 2max} and Proposition \ref{X_1(6) and rational 9isog}. We define the elliptic curve
    \begin{align*}
        E_\frac{1}{124} \colon y^2 + \frac{123}{124}xy - \frac{125}{15376}y = x^3 -
    \frac{125}{15376}x^2.
    \end{align*}
    This curve is 2-maximal, has rational torsion subgroup isomorphic to $\Z/6\Z$ and sits in one of the corners of an isogeny graph of type $R_6$.
\end{example}

\begin{prop}\label{X_1(6) and two independent 3isog}
    Let $E_t$ be as in Proposition \ref{X_1(6) curve} above. If $P_3(x) = x^2(x - t) + \frac{1}{3}(x - t^2 - t)^2$ has a rational root, then $E_t$ has two independent rational 3-isogenies and sits in the middle of an isogeny graph of size 6. Moreover, the torsion graph is $([6],[6],[6],[6],[2],[2])$. 
\end{prop}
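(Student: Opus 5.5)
I would follow the same route as Proposition \ref{X_0(6) and two independent 3isog}, adapted to the Tate normal form of Proposition \ref{X_1(6) curve}. First, compute the 3-division polynomial $\psi_3(x)$ of $E_t$ over $\Q(t)$ from the $b$-invariants $b_2,b_4,b_6,b_8$ of the model $y^2+(1-t)xy-(t^2+t)y = x^3-(t^2+t)x^2$, using $\psi_3 = 3x^4+b_2x^3+3b_4x^2+3b_6x+b_8$. The point $(0,0)$ has order 6, so $2\cdot(0,0)$ is a rational point of order 3. Its $x$-coordinate is computed directly from the group law and should turn out to be $t^2+t$. Hence $\psi_3(x)$ has the linear factor $x-(t^2+t)$ over $\Q(t)$. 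I would perform this division (by hand or in Magma) and check that the cofactor is $3P_3(x) = 3x^2(x-t)+(x-t^2-t)^2$, up to the constant. Confirming this factorization, a finite polynomial identity in $\Q(t)[x]$, is the main computational step.

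Next, suppose $P_3$ has a rational root $x_0$. Then $x_0 \neq t^2+t$ for the admissible $t$: otherwise $x_0=t^2+t$ would force $(t^2+t)^2\cdot t^2=0$ by substituting into $P_3$, which is excluded since $t\notin\{0,-1\}$. So $x_0$ is the $x$-coordinate of a 3-torsion point $Q$ not in $\langle 2\cdot(0,0)\rangle$. The subgroup $C_2=\langle Q\rangle=\{O,\pm Q\}$ is determined by $x_0$, hence $G_\Q$-stable. By \cite[Prop. III.4.12]{silverman2009arithmetic} it is the kernel of a rational 3-isogeny, independent of the one with kernel $C_1=\langle 2\cdot(0,0)\rangle$. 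Thus $E[3]=C_1\oplus C_2$ splits into two rational lines, and $E_t$ has two independent rational 3-isogenies.

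Finally, for the isogeny graph: $E_t$ has a rational 2-isogeny together with isogenies to $E_t/C_1$ and $E_t/C_2$. Composing through $E_t$ gives a rational cyclic 9-isogeny $E_t/C_1\to E_t/C_2$, so the 3-part of the isogeny class contains at least three curves in a line with $E_t$ in the middle. Tensoring with the 2-isogeny gives at least six curves. By the classification in \cite{chiloyanalvaro}, the only possibility is type $R_6$ with $E_t$ in the middle column. Because $E_t(\Q)$ contains a point of order 6, the vertex of $E_t$ carries torsion $[6]$. The only torsion configuration for $R_6$ in Theorem \ref{chiloyan} with a $[6]$ vertex is $([6],[6],[6],[6],[2],[2])$, which settles the torsion graph.

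The main obstacle is purely computational: obtaining the explicit factorization of $\psi_3$ and matching the cofactor with the stated $P_3$, including the normalization of the constant $\frac13$. I would verify this with Magma.
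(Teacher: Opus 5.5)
Your proposal is correct and follows essentially the same route as the paper. You factor $\psi_3$ as $(x-(t^2+t))\cdot 3P_3(x)$ (I checked that the cofactor is exactly $3x^2(x-t)+(x-t^2-t)^2$), use a rational root of $P_3$ to obtain a second Galois-stable subgroup of order $3$ via \cite[Prop. III.4.12]{silverman2009arithmetic}, and then appeal to the classification; your extra checks, such as $P_3(t^2+t)=t^2(t^2+t)^2\neq 0$ and the explicit cyclic $9$-isogeny, are welcome additions. The one slip is citing Theorem \ref{chiloyan} for the $R_6$ torsion configurations: that theorem covers only 2-maximal curves, which this proposition does not assume, so you should instead cite the general classification in \cite{chiloyanalvaro}, which gives the same conclusion $([6],[6],[6],[6],[2],[2])$.
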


\begin{proof}
    The proof follows the same framework as the $X_0(6)$ case in Proposition \ref{X_0(6) and two independent 3isog}. By construction, the universal curve $E_t$ with $\mathbb{Z}/6\mathbb{Z}$ torsion possesses both a rational 2-torsion point and a rational 3-torsion point, guaranteeing both a rational 2-isogeny and a rational 3-isogeny. The existence of the rational 3-torsion point means the 3-division polynomial $\psi_3(x)$ factors over $\Q(t)$ into a linear term and a cubic factor, $P_3(x)$. Using Magma, we compute
    \begin{align*}
        \psi_3(x) = 3x^4 + (1 - 6t - 3t^2)x^3 + 3t(t^2 - 1)x^2 + 3t^2(t+1)^2x - t^3(t+1)^3,
    \end{align*}
    and factoring this polynomial over $\Q(t)$ gives us $P_3(x)$ from the statement. If $P_3(x)$ has a rational root, then $E_t$ will have another cyclic subgroup of order 3, say $C_2$. Because the $x$-coordinate of its generator is rational, $C_2$ is invariant under the action of the absolute Galois group $G_{\Q}$. By \cite[Prop. III.4.12]{silverman2009arithmetic}, this subgroup corresponds to an independent rational 3-isogeny. Since $E_t$ possesses a rational 2-isogeny and two independent rational 3-isogenies, the classification of isogeny-torsion graphs in \cite{chiloyanalvaro} dictates that $E_t$ sits in the middle column of an isogeny graph with 6 vertices. Furthermore, because $E_t(\Q)_{\text{tor}} \cong \mathbb{Z}/6\mathbb{Z}$, the same classification restricts the isogeny-torsion graph strictly to $([6],[6],[6],[6],[2],[2])$.
\end{proof}

\begin{example}
    Let $t = \frac{19}{72}$. One can verify that this value of $t$ satisfies the conditions stipulated in Theorem \ref{X_1(6) 2max} and Proposition \ref{X_1(6) and two independent 3isog}. We define the elliptic curve
    \begin{align*}
        E_\frac{19}{72} \colon y^2 + \frac{53}{72}xy - \frac{1729}{5184}y = x^3 -
    \frac{1729}{5184}x^2.
    \end{align*}
    This curve is 2-maximal, has rational torsion subgroup isomorphic to $\Z/6\Z$, and sits in the middle column on an isogeny graph of type $R_6$.
\end{example}

\noindent This handles the case where the elliptic curve has torsion subgroup $\Z/6\Z$ and belongs to an isogeny class of size 4 or 6 and each curve in the class has 2-adic image [2.3.0.1].

\section{Elliptic Curves with a Rational 5-Isogeny}

\noindent In this section we will give models for elliptic curves with a rational 2-isogeny, and a rational 5-isogeny. From the classification in \cite{Chiloyan2adic}, curves that are 2-maximal and have these isogenies will be in an isogeny-graph of type $R_4(10)$. The only distinction we need to make is whether $E$ has a rational 5-torsion point or not. We begin here by giving the model for an elliptic curve defined over $\Q$ such that it has a rational 2-isogeny and rational 5-isogeny.

\subsection{Elliptic Curves with a Rational 10-Isogeny}

\begin{prop}\label{X_0(10) curve}
    Let $t \in \Q \setminus \{-4,0,1\}$. Let $E_t$ be the elliptic curve defined by the equation
    \begin{align*}
        E_t \colon y^2 + xy = x^3 + \frac{36A(t)}{D(t)}x + \frac{A(t)}{D(t)},
    \end{align*}
    where $A(t) = t^2(t - 1)^5(t + 4)^{10}$ and $D(t) = (t^2 + 4)(t^2 - 22t - 4)^2(t^2 - 4t + 8)^2(t^4 + 536t^3 - 264t^2 + 416t - 64)^2$. Then $E_t$ has a rational 10-isogeny.
\end{prop}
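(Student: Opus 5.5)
The plan is to show that $j(E_t)$ lies in the image of a $j$-map for $X_0(10)$, and then to transfer the 10-isogeny from a curve with that $j$-invariant to $E_t$ itself. This mirrors how the models in Section 3 and Proposition \ref{X_0(6)curve} were built.

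\textbf{Step 1: compute $j(E_t)$.} The curve $y^2+xy = x^3 + a x + b$ with $b = a/36$ belongs to the standard family
$$y^2+xy=x^3-\frac{36}{J-1728}x-\frac{1}{J-1728},$$
whose $j$-invariant is $J$. Here $a = 36A(t)/D(t)$ and $b = A(t)/D(t)$, so matching coefficients gives $-1/(J-1728) = A(t)/D(t)$. Hence
$$j(E_t) = 1728 - \frac{D(t)}{A(t)} = \frac{1728\,A(t) - D(t)}{A(t)}.$$
I would confirm this directly from $c_4$, $c_6$ and $\Delta$ in Magma. The discriminant is a nonzero multiple of a power of $A(t)$ times a power of $1728A(t)-D(t)$, so nonsingularity holds precisely when $A(t)\neq 0$ and $j(E_t)\neq 1728$. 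The excluded values $t\in\{-4,0,1\}$ are the zeros of $A$, i.e. the places where $j(E_t)=\infty$; these are the cusps.

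\textbf{Step 2: identify $j(E_t)$ with a $j$-map of $X_0(10)$.} I take a known hauptmodul parameterization $j_{10}(h)$ of $X_0(10)$, of degree $18$ (as from Tables 3 and 4 in \cite{LozanoRobledo}). I then look for a Möbius transformation $h = \mu(t)$ such that
$$j_{10}(\mu(t)) = 1728 - \frac{D(t)}{A(t)}.$$
The cusp data guide the search. The denominator $A(t) = t^2(t-1)^5(t+4)^{10}$ should match the cusp widths $1,2,5,10$ of $X_0(10)$, with a fourth cusp at $t=\infty$. The numerator $1728A - D$ should be a perfect cube times the degree-$2$ factor coming from elliptic points. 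This is exactly the form forced by $j_{10}$, and it also gives a consistency check on the degrees. Alternatively, I would check that $j(E_t)$ factors through both $X_0(2)$ and $X_0(5)$. For $X_0(2)$ I would exhibit $s(t)$ with
$$\frac{(256-s(t))^3}{s(t)^2}=j(E_t),$$
in the style of the proof of Theorem \ref{X_0(6)-2max}. For $X_0(5)$ I would exhibit $v(t)$ with $p_5(v(t))=0$, as in Proposition \ref{rational5isog}. Since $X_0(10)$ is the fiber product of $X_0(2)$ and $X_0(5)$ over $X(1)$, and the degrees $3\cdot 6 = 18$ match, this gives the same conclusion. I expect this identification to be the main obstacle. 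It is purely computational, but it requires finding the right change of parameter, and I would first try the cusp matching described above.

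\textbf{Step 3: transfer the isogeny to $E_t$.} For $t\in\Q$ away from the excluded values, Step 2 gives a rational non-cuspidal point on $X_0(10)$ with $j$-invariant $j(E_t)$. Hence there is some $E'/\Q$ with $j(E')=j(E_t)$ and a $G_\Q$-stable cyclic subgroup of order $10$. Because $E_t$ is defined over $\Q$ with $j\neq 0,1728$, the curves $E'$ and $E_t$ are quadratic twists of each other. A quadratic twist changes the Galois action only by the scalar character $\pm1$, which preserves every subgroup. So the cyclic subgroup of order $10$ remains $G_\Q$-stable on $E_t$, and by \cite[Prop. III.4.12]{silverman2009arithmetic} it is the kernel of a rational $10$-isogeny. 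The finitely many $t$ with $j(E_t)\in\{0,1728\}$ would be checked by hand: either they fall among the excluded values, or the claim is verified directly for them.
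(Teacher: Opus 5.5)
Your proposal is correct and follows the same route as the paper, whose proof only asserts that $E_t$ is the model $y^2+xy=x^3-\frac{36}{J-1728}x-\frac{1}{J-1728}$ built from the $j$-map of $X_0(10)$. Your Steps 2--3 supply the verification the paper omits, and the M\"obius search in Step 2 succeeds with $h=\frac{4(t-1)}{t+4}$, which carries the hauptmodul $j_{10}(h)=\frac{(h^6-4h^5+16h+16)^3}{(h+1)^2(h-4)h^5}$ to $1728-D(t)/A(t)$.

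There is one slip to fix. Since $j(E_t)=(1728A-D)/A$, the discriminant $-A(1728A-D)^2/D^3$ vanishes where $j(E_t)=0$, not where $j(E_t)=1728$; the value $j=1728$ is where $D=0$ and the coefficients blow up. Also, $D-1728A=\bigl(t(t+4)^5-256(t-1)^5\bigr)^3$ is a pure cube, so the factor $t^2+4$ sits in $D$ rather than in $1728A-D$, because $\Gamma_0(10)$ has two elliptic points of order $2$ and none of order $3$. Neither $D$ nor $t(t+4)^5-256(t-1)^5$ has a rational root, so the exceptional case $j(E_t)\in\{0,1728\}$ in your Step 3 never occurs.
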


\begin{proof}
    The curve comes from the $j$-map associated to the modular curve $X_0(10)$.
\end{proof}

\noindent Now we proceed to find conditions on $t$ so that this curve is 2-maximal.

\begin{theorem}\label{X_0(10) 2max}
    Let $E_t$ be as in Proposition \ref{X_0(10) curve}. Let the polynomials $P_1(t), P_2(t), P_3(t)$ be defined as:
    \begin{align*}
        P_1(t) &= t(t+4)(t-1) \\
        P_2(t) &= (t-1)\left(64(t-1)^5 - t(t+4)^5\right) \\
        P_3(t) &= P_1(t)P_2(t).
    \end{align*}
    If for all $d \in \{\pm 1, \pm 2\}$ and $f(t) \in \{P_1(t), P_2(t), P_3(t)\}$, the rational number $d \cdot f(t)$ is not a square in $\Q$, then $E_t$ is 2-maximal.
\end{theorem}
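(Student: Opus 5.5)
The plan is to reduce to Theorem \ref{L_2-2max}, exactly as in the proofs of Theorems \ref{X_0(6)-2max} and \ref{X_1(6) 2max}. The curve $E_t$ has a rational 2-isogeny, so $j(E_t)$ lies in the image of the $X_0(2)$ $j$-map. First I compute $j(E_t)$ from the model in Proposition \ref{X_0(10) curve}. Then I find a rational function $s(t)\in\Q(t)$ with
\begin{align*}
    \frac{(256-s(t))^3}{s(t)^2} = j(E_t).
\end{align*}
Such an $s$ exists because the forgetful map $X_0(10)\to X_0(2)$ is defined over $\Q$ and both curves have genus $0$. Concretely, $s$ is the degree-$3$ function on $X_0(10)$ pulled back from the Hauptmodul of $X_0(2)$. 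It can be obtained by factoring $(256-X)^3 - j(E_t)X^2$ over $\Q(t)$ and picking out the linear factor in $X$. From the shape of $A(t)$, I expect $s(t)$ to be, up to a constant and squares, of the form $\frac{c\,t(t-1)^{a}(t+4)^{b}}{(t-1)^{c'}}$. For instance, something like $s(t)=\frac{64(t-1)^5}{t(t+4)^5}$ up to inversion and normalization.

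Next I apply Theorem \ref{L_2-2max} at the value $s(t)$. Its proof shows that the 2-adic image is [2.3.0.1] unless $d\cdot s$, $d\cdot(s-64)$ or $d\cdot s(s-64)$ is a rational square for some $d\in\{\pm1,\pm2\}$. That criterion depends only on $j$: the 2-adic image up to quadratic twist is determined by the $j$-invariant, and every group in Table \ref{subgroups of [2.3.0.1]} has $-I$ in it (they arise as $j$-map pullbacks). I then strip square factors from each quantity.

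\begin{itemize}
\item If $s=\frac{64(t-1)^5}{t(t+4)^5}$, then modulo squares $s\equiv t(t+4)(t-1)=P_1(t)$.
\item For the second quantity,
\begin{align*}
    s-64=\frac{64\big(64(t-1)^5-t(t+4)^5\big)}{64\,t(t+4)^5}\cdot(\text{const}),
\end{align*}
and modulo squares this should reduce to $t(t+4)\big(64(t-1)^5-t(t+4)^5\big)$, or to $(t-1)\big(64(t-1)^5-t(t+4)^5\big)=P_2(t)$, depending on the precise normalization.
\item The product $s(s-64)$ then reduces to $P_1P_2=P_3$ modulo squares.
\end{itemize}

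If the square-free parts come out different from the stated $P_1,P_2,P_3$, the fix is to choose the correct preimage $s$. There is a correspondence between $s$ and $4096/s$-type involutions, namely the Atkin–Lehner involution on $X_0(2)$, which swaps the roles of the factors. Under it, the triple of square classes $\{s, s-64, s(s-64)\}$ gets permuted. Since the hypothesis ranges over all three $f$ and all $d$, the condition is invariant under this permutation. So it suffices to get the set $\{P_1,P_2,P_3\}$ right up to squares and the twists by $\pm1,\pm2$.

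The main obstacle is the explicit identification of $s(t)$ and the verification that $s-64$ has square class exactly $d\cdot P_2(t)$. This is a computer-algebra step. I would do it in SageMath: compute $j(E_t)$, factor $(256-X)^3-j(E_t)X^2$ over $\Q(t)$, and then take square-free parts of $s$, $s-64$ and $s(s-64)$ in $\Q[t]$, tracking the constant up to $\{\pm1,\pm2\}$ times squares. Once that is done, the conclusion follows immediately from Theorem \ref{L_2-2max}.
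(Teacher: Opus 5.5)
Your proposal follows the paper's proof essentially step for step: compute $s(t)$ with $\frac{(256-s(t))^3}{s(t)^2}=j(E_t)$, strip square factors from $s$, $s-64$ and $s(s-64)$, and invoke Theorem \ref{L_2-2max}. For the record, the linear factor you would extract is $s(t)=\frac{t(t+4)^5}{(t-1)^5}$, giving $s\equiv P_1$, $s-64\equiv -P_2$ and $s(s-64)\equiv -P_3$ modulo squares; your guess $\frac{64(t-1)^5}{t(t+4)^5}$ is not a root, and even $\frac{4096(t-1)^5}{t(t+4)^5}$ belongs to the 2-isogenous curve, but your Atkin--Lehner symmetry remark correctly shows that choosing that parameter would lead to the same set of conditions.
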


\begin{proof}
    The $j$-invariant for $E_t$ is given by:
    \begin{align*}
        j(E_t) = 1728 - \frac{D(t)}{A(t)},
    \end{align*}
    and again, since this curve has a rational 2-isogeny, there exists some $s(t)$ such that $\frac{(256-s(t))^3}{s(t)^2} = j(E_t)$. Using SageMath, we find that $s(t) = \frac{t(t+4)^5}{(t-1)^5}$. Now we simply proceed as in Theorem \ref{L_2-2max}. After removing the square terms in $s(t)$ we are left with $s(t)$ is a square in $\Q$ if and only if $t(t+4)(t-1)$ is a square, which is exactly $P_1(t)$. We get $P_2(t)$ and $P_3(t)$ from similar calculations using $s(t) - 64$ and $s(t)(s(t)-64)$.
\end{proof}

\begin{example}
    Let $t = \frac{8}{3}$. One can verify that $t$ satisfies the conditions required in Theorem \ref{X_0(10) 2max}. We construct the elliptic curve 
    \begin{align*}
        E_\frac{8}{3} \colon y^2 + xy = x^3 + \frac{432}{45125}x + \frac{12}{45125}.
    \end{align*}
    This curve is 2-maximal, has rational torsion subgroup isomorphic to $\Z/2\Z$, and belongs to an isogeny class of type $R_4(10)$.
\end{example}

\subsection{Elliptic Curves with a Rational 5-Torsion Point}

\begin{prop} \label{X_1(10) curve}
    Let $t \in \Q \setminus \{0,1,\frac{1}{2}\}$ and define the elliptic curve
    \begin{align*}
        E_t \colon y^2 + \left(1-\frac{-2t^3 + 3t^2 - t}{t^2 - 3t + 1}\right)xy - \left(\frac{2t^5 - 3t^4 + t^3}{(t^2 -3t+1)^2} \right)y = x^3 - \left( \frac{2t^5 - 3t^4 + t^3}{(t^2 -3t+1)^2} \right)x^2.
    \end{align*}
    Then $E_t(\Q)_{\text{tor}} \cong \Z/10\Z$.
\end{prop}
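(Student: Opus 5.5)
This curve is the Tate normal form $E(b,c)\colon y^2+(1-c)xy-by=x^3-bx^2$ with $c=\frac{-2t^3+3t^2-t}{t^2-3t+1}$ and $b=\frac{2t^5-3t^4+t^3}{(t^2-3t+1)^2}$. The plan is to check that these are exactly Kubert's parameters for a point of order $10$, and then invoke Mazur's theorem to pin down the full torsion subgroup.

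\textbf{Step 1: identify the family.} In Tate normal form the point $P=(0,0)$ has order $10$ precisely when
\[
c=\frac{-2t^3+3t^2-t}{t^2-3t+1}=\frac{t(2t-1)(1-t)}{t^2-3t+1},
\]
which is visibly the case here, and $b=cd$ with $d=\frac{t^2}{t-(t-1)^2}$. I will verify $b=cd$ directly: we have $t-(t-1)^2=-(t^2-3t+1)$, so
\[
cd=-\frac{t^3(2t-1)(1-t)}{(t^2-3t+1)^2}=\frac{t^3(2t-1)(t-1)}{(t^2-3t+1)^2}=\frac{2t^5-3t^4+t^3}{(t^2-3t+1)^2}.
\]
This matches the coefficient in the statement.

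\textbf{Step 2: show $P$ has order exactly $10$.} I will compute $2P$ and $5P$ by hand using the standard Tate normal form formulas:
\[
P=(0,0),\quad 2P=(b,bc),\quad 3P=(c,b-c),\quad -P=(0,b),\quad -2P=(b,0).
\]
The condition $5P=-5P$, equivalently $x(4P)=x(6P)$, reduces to a relation between $b$ and $c$ that the parametrization satisfies identically in $t$. Rather than redoing Kubert's derivation, I would cite \cite{Kubert} (or \cite{largetorsion}) for this, and confirm it symbolically in Magma, since the identity is routine but messy. I then need $10P=O$ while $2P\neq O$ and $5P\neq O$. This holds as long as $E_t$ is nonsingular, because a nonsingular specialization of the generic curve, on which $P$ has exact order $10$, keeps the order. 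So I will compute the discriminant:
\[
\Delta=\frac{t^{10}(t-1)^{10}(2t-1)^5(4t^2-2t-1)}{(t^2-3t+1)^{10}}\quad(\text{up to sign/constant}).
\]
The factors $t$, $t-1$, and $2t-1$ give the excluded values $0,1,\frac12$. The factor $4t^2-2t-1$ and the pole factor $t^2-3t+1$ have no rational roots, since their discriminants are $20$ and $5$. Hence $E_t$ is an elliptic curve over $\Q$ for every allowed $t$, and $\Z/10\Z\hookrightarrow E_t(\Q)_{\text{tor}}$.

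\textbf{Step 3: upgrade the embedding to an isomorphism.} By Mazur's theorem \cite{mazur1977modular}, a torsion group over $\Q$ containing $\Z/10\Z$ must be $\Z/10\Z$ itself. The only other candidates would be $\Z/2\Z\times\Z/10\Z$ or cyclic groups of order $20$ or more, and none of these occur over $\Q$. Therefore $E_t(\Q)_{\text{tor}}\cong\Z/10\Z$.

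The main obstacle is Step 2: verifying the order-$10$ identity and the exact discriminant factorization. I would handle it by a direct symbolic computation in Magma or SageMath, cross-checked against Kubert's table.
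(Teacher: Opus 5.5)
Your proposal is correct and takes the same route as the paper, whose proof simply asserts that $(0,0)$ has order $10$ and then applies Mazur's theorem. Your extra steps fill in what the paper leaves implicit: the identification with Kubert's parameters via $b=cd$ with $d=t^2/(t-(t-1)^2)$, and the discriminant $\Delta=t^{10}(t-1)^{10}(2t-1)^5(4t^2-2t-1)/(t^2-3t+1)^{10}$, which is exactly right (with constant $1$) and explains why precisely $t\in\{0,1,\tfrac12\}$ must be excluded.
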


\begin{proof}
    The point $(0,0)$ on $E_t$ has order 10. Mazur's torsion theorem then implies that $E_t(\Q)_{\text{tor}} \cong \Z/10\Z$.
\end{proof}

\noindent We determine conditions on $t$ that imply $E_t$ is 2-maximal. 

\begin{theorem}\label{X_1(10) 2max}
    Let $E_t$ be as in Proposition \ref{X_1(10) curve} above. Let the polynomials $Q(t), P_1(t), P_2(t), P_3(t)$ be defined as:
    \begin{align*}
        Q(t) &= (2t-1)^5(4t^2-2t-1) + 64t^5(t-1)^5(t^2-3t+1) \\
        P_1(t) &= -t(t-1)(t^2-3t+1)(2t-1)(4t^2-2t-1) \\
        P_2(t) &= Q(t)(2t-1)(4t^2-2t-1) \\
        P_3(t) &= -t(t-1)(t^2-3t+1)Q(t).
    \end{align*}
    If for all $d \in \{ \pm 1, \pm 2\}$ and $f(t) \in \{P_1(t),P_2(t),P_3(t)\}$, the rational number $d \cdot f(t)$ is not a square in $\Q$, then $E_t$ is 2-maximal.
\end{theorem}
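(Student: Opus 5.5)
We follow the same strategy as in Theorems \ref{X_0(6)-2max}, \ref{X_1(6) 2max} and \ref{X_0(10) 2max}. The curve $E_t$ has a rational 2-torsion point, so it has a rational 2-isogeny. Hence $j(E_t)$ lies in the image of the $j$-map $j_2(s)=\frac{(256-s)^3}{s^2}$ for $X_0(2)$. The first step is to compute $j(E_t)$ from the Tate-normal-form coefficients $b=c=\frac{2t^5-3t^4+t^3}{(t^2-3t+1)^2}$. Then we find a rational function $s(t)\in\Q(t)$ with $j_2(s(t))=j(E_t)$, for instance by factoring $(256-s)^3-j(E_t)s^2$ over $\Q(t)$ in SageMath and picking out a linear factor in $s$. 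The point $(0,0)$ generates $\Z/10\Z$, and $5(0,0)$ is the rational 2-torsion point. So the relevant $s(t)$ corresponds to the kernel generated by $5(0,0)$, and we expect
\[
s(t) = \frac{(\text{square})\cdot(-t(t-1)(t^2-3t+1)(2t-1)(4t^2-2t-1))}{(\text{square})},
\]
up to the precise shape found in the computation. This should mirror the $X_0(10)$ case, where $s(t)=\frac{t(t+4)^5}{(t-1)^5}$.

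Once $s(t)$ is in hand, we apply Theorem \ref{L_2-2max} with its parameter equal to $s(t)$. The two curves share the same $j$-invariant, and $j\neq 0,1728$ for generic $t$, so $E_t$ is a quadratic twist of the Theorem \ref{L_2-2max} curve with parameter $s(t)$. We also need that the 2-adic image being [2.3.0.1] depends only on $j$ in the relevant sense. This holds because the obstruction is measured by rational points on the genus-0 curves in Table \ref{subgroups of [2.3.0.1]}, whose $j$-maps are exactly what the proof of Theorem \ref{L_2-2max} compares against. Consequently, $E_t$ is 2-maximal as soon as $d\cdot s(t)$, $d\cdot(s(t)-64)$ and $d\cdot s(t)(s(t)-64)$ are non-squares for every $d\in\{\pm1,\pm2\}$.

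The last step is to clear square factors. From $s(t)$ we strip squared factors of numerator and denominator; every odd-power factor becomes a single factor, and we should be left with $P_1(t)$. For $s(t)-64$ we put everything over a common denominator. The numerator should then factor, up to squares and a constant, as $Q(t)$ times $(2t-1)(4t^2-2t-1)$, giving $P_2(t)$. Here we must check that any leftover constant is a square times an element of $\{\pm1,\pm2\}$, so that it is absorbed by $d$. Finally, $s(s-64)$ has squarefree part $P_1P_2$, and we reduce it modulo squares. The factor $((2t-1)(4t^2-2t-1))^2$ drops out, which gives exactly $P_3(t)=-t(t-1)(t^2-3t+1)Q(t)$.

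The main obstacle is the explicit algebra: identifying the correct $s(t)$ among the three 2-isogeny parameters and computing the factorization of the numerator of $s(t)-64$ to recognize $Q(t)$. We would do both by computer algebra, then confirm by substituting back, checking the identity $j_2(s(t))=j(E_t)$ and the factorization $s(t)-64=\square\cdot(2t-1)(4t^2-2t-1)Q(t)$ in $\Q(t)$.
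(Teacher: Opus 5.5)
Your proposal is correct and follows the paper's proof. The paper obtains $s(t)=\frac{-4096\,t^5(t-1)^5(t^2-3t+1)}{(2t-1)^5(4t^2-2t-1)}$, so $s(t)-64=\frac{-64\,Q(t)}{(2t-1)^5(4t^2-2t-1)}$, and reducing $s$, $s-64$ and $s(s-64)$ modulo squares gives $P_1$, $-P_2$ and $-P_3$, with the signs absorbed by $d$ as you anticipate. One slip to fix: the $xy$-coefficient of the displayed Tate normal form is $1-c$ with $c=\frac{-2t^3+3t^2-t}{t^2-3t+1}$, while $b=\frac{2t^5-3t^4+t^3}{(t^2-3t+1)^2}$, so $b\neq c$; setting $b=c$ gives the $\Z/5\Z$ Tate normal form, and you would compute the $j$-invariant of the wrong curve.
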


\begin{proof}
    The $j$-invariant of $E_t$ is given by:
    \begin{align*}
        j(E_t) = \frac{32\left(t^{12}-8t^{11}+26t^{10}-45t^9+45t^8-18t^7-16t^6+27t^5-15t^4+\frac{5}{2}t^3 + t62 - \frac{1}{2}t + \frac{1}{16}\right)}{t^{10}\left(t-\frac{1}{2}\right)^5(t-1)^{10}\left(t^2-\frac{1}{2}t-\frac{1}{4}\right)(t^2-3t+1)^2},
    \end{align*}
    and since $E_t$ has a rational 2-isogeny, there exists some $s(t)$ such that $\frac{(256-s(t))^3}{s(t)^2} = j(E_t)$. Using SageMath, we get $s(t) = \frac{-4096t^5(t-1)^5(t^2-3t+1)}{(2t-1)^5(4t^2-2t-1)}$. Now we proceed as in the proof of Theorem \ref{L_2-2max}. After removing the square factors from $s(t)$, we find that $s(t)$ is a square if and only if $-t(t-1)(t^2-3t+1)(2t-1)(4t^2-2t-1)$ is a square, which is precisely $P_1(t)$. We get $P_2(t)$ and $P_3(t)$ from similar calculations using $s(t)-64$ and $s(t)(s(t)-64)$. 
\end{proof}

\begin{example}
    Let $t = \frac{19}{69}$. One can verify that $t$ satisfies the conditions required in Theorem \ref{X_1(10) 2max}. We construct the elliptic curve defined by
    \begin{align*}
        E_\frac{19}{69} \colon y^2 + \frac{111491}{82041}xy - \frac{10631450}{97546749}y = x^3 - \frac{10631450}{97546749}x^2.
    \end{align*}
    This curve is 2-maximal, has rational torsion subgroup isomorphic to $\Z/10\Z$, and sits in an isogeny graph of type $R_4(10)$.
\end{example}

\noindent \textbf{Acknowledgements.} The author would like to express his gratitude to Nathan Kaplan for feedback on an earlier draft and for many fruitful conversations in the preparation of this manuscript. The author would also like to thank Alex Barrios and David Zureick-Brown for comments on an earlier version of this manuscript.

\clearpage
\bibliographystyle{plain}
\bibliography{references}

\end{document}